\documentclass{article}

\RequirePackage{amsthm,amsmath,amsfonts,amssymb}
\RequirePackage[numbers]{natbib}
\RequirePackage[colorlinks,citecolor=blue,urlcolor=blue]{hyperref}
\RequirePackage{graphicx}

\usepackage{algorithmic}
\usepackage{algorithm}
\usepackage{multirow}
\usepackage{stmaryrd}
\usepackage[dvipsnames]{xcolor} 
\usepackage{enumitem}
\usepackage{float}
\usepackage{subcaption}
\usepackage[utf8]{inputenc}
\usepackage{authblk}
\usepackage{orcidlink}
\usepackage{indentfirst}
\usepackage[includeheadfoot,margin=2.54cm]{geometry}

\theoremstyle{plain}

\newtheorem{corollary}{Corollary}
\newtheorem{lemma}{Lemma}
\newtheorem{proposition}{Proposition}

\theoremstyle{remark}
\newtheorem{remark}{Remark}

\DeclareMathOperator*{\argmin}{arg\,min}
\DeclareMathOperator*{\argmax}{arg\,max}

\DeclareMathOperator*{\interior}{int}

\DeclareMathOperator*{\domain}{dom}

\DeclareMathOperator*{\minimize}{minimize}

\DeclareMathOperator*{\rk}{rank}

\title{\bf A divergence-based condition to ensure quantile improvement in black-box global optimization}
\author[1,a]{Thomas Guilmeau}
\author[1,b]{Emilie Chouzenoux}
\author[2]{Víctor Elvira}

\affil[1]{Université Paris-Saclay, CentraleSupélec, INRIA, CVN, France}
\affil[ ]{$^{\textrm{a}}$ \texttt{thomas.guilmeau@inria.fr} \orcidlink{0000-0002-8484-6550}}
\affil[ ]{$^{\textrm{b}}$ \texttt{emilie.chouzenoux@centralesupelec.fr} \orcidlink{0000-0003-3631-6093}}
\affil[2]{School of Mathematics, University of Edinburgh, United Kingdom}
\affil[ ]{\texttt{victor.elvira@ed.ac.uk} \orcidlink{0000-0002-8967-4866}}
\date{}

\begin{document}

\maketitle

\begin{abstract}
Black-box global optimization aims at minimizing an objective function whose analytical form is not known. To do so, many state-of-the-art methods rely on sampling-based strategies, where sampling distributions are built in an iterative fashion, so that their mass concentrate where the objective function is low. Despite empirical success, the theoretical study of these methods remains difficult. In this work, we introduce a new framework, based on divergence-decrease conditions, to study and design black-box global optimization algorithms. Our approach allows to establish and quantify the improvement of sampling distributions at each iteration, in terms of expected value or quantile of the objective. We show that the information-geometric optimization approach fits within our framework, yielding a new approach for its analysis. We also establish sampling distribution improvement results for two novel algorithms, one related with the cross-entropy approach with mixture models, and another one using heavy-tailed sampling distributions.
\end{abstract}

\medskip
\noindent
{\bf Keywords.} Black-box optimization,  Variational inference, Mixture models, Heavy-tailed distributions, Kullback-Leibler divergence.

\medskip
\noindent
{\bf MSC2020 Subject Classification.} 62F15, 62F30, 62B11, 90C26, 90C30.

\section{Introduction}

Finding the minimizer of a possibly non-convex objective function that is only accessible through a black-box oracle is a challenging, yet important task, which has motivated many works \cite{zhigljavski2008, rios2013}. Given the presence of eventual local minima and the difficulty of evaluating or even approximating gradients, many methods resort to sampling procedures. These rely on evolution strategies to construct proposal distributions \cite{hansen2015}, typically Gaussians, to generate samples close to the minimizers of the objective. Among these methods, one can mention the class of estimation-of-distribution algorithms \cite{larranaga2002}, the cross-entropy algorithm \cite{kroese2006}, or the CMA-ES algorithm \cite{hansen2001, hansen2023}.

A useful perspective to gain theoretical insights is to understand these algorithms as optimization schemes aiming at minimizing an expectation-based reformulation of the original problem over a set of proposal sampling distributions. The resulting reformulated problem can consist in minimizing the expected objective value \cite{glasmachers2010, akimoto2012, malago2015}, or the expected transformed objective value, for some well-chosen transformation \cite{wierstra2008, wierstra2014, brookes2022}. These transformations can also be rank-based \cite{wierstra2008, wierstra2014}. Rank-based transforms only require a ranking of solutions and thus preserve invariance properties with respect to monotonic transformation of the objective function. Algorithms with such invariance properties behave identically on two problems with the same ranking of solutions, allowing to generalize insights from one problem to another \cite{hernando2019}. Invariance properties usually yield better-performing algorithms \cite{hansen2011, beyer2014}. Rank-based transformations rely on reformulations that depend on the retained proposal. In the infinite-population limit, a quantile-based reformulation of the objective function is obtained~\cite{ollivier2017}.

In order to solve the reformulated optimization problem over a set of proposals, many algorithms resort to natural gradient updates. These updates have been proposed in \cite{amari1998}, and consist in a gradient descent step preconditioned by the Fisher information matrix of the proposal. Natural gradients have been used in the context of estimation-of-distribution algorithms \cite{malago2015, brookes2022}, evolution strategies \cite{akimoto2012, wierstra2014}, or discrete permutation problems \cite{ceberio2023}. Natural gradients yield invariance properties with respect to parametrization of the proposals \cite{wierstra2014, ollivier2017}. They are straightforward to be computed when the proposal lies in the exponential family \cite{khan2018}. The natural gradient descent has been used jointly with a (rank-based) quantile-based reformulation of the objective in \cite{ollivier2017,akimoto2013}, leading to the so-called information-geometric optimization (IGO) framework. IGO recovers many existing algorithms, such as the cross-entropy (CE) algorithm \cite{kroese2006}, also based on quantiles, as well as various estimation-of-distribution algorithms~\cite{larranaga2002}.

The theoretical study of the aforementioned natural gradient methods mostly follows two main approaches. The first one is to establish asymptotic convergence of the proposals to a limit proposal distribution well-suited to solve the original problem. This is the approach of \cite{akimoto2012cvgceIGO, beyer2014} showing the convergence of Gaussian proposals used in IGO. Note that these results are established in an infinite sample size regime, and for infinitesimally small step sizes, amounting to continuous time. Their applicability in practical implementations, characterized by non-zero step sizes and stochastic errors, is up to our knowledge still an open problem. The second approach consists in proving an improvement on the reformulated optimization problem at every iteration. Such results are useful in practice as they hold without having to wait for an eventual asymptotic regime to be reached. This is done in \cite{akimoto2012}, in the case of expected objective value minimization over Gaussian proposals, and in \cite{ollivier2017, akimoto2013} for the IGO reformulation of the original problem, although assuming infinitesimally small step-sizes, or proposals within an exponential family. In \cite{ollivier2017, akimoto2013}, the improvement implies a quantile-based improvement on the original problem. This means that a larger fraction of the mass of the proposals will concentrate where the objective function is low as the algorithm iterations progress. We are not aware of any study connecting the two approaches, which can be explained by the very different mathematical tools and paradigms used in both research lines (e.g., continuous versus discrete time).

We follow in this work the second approach, with the aim to establish new improvement results for black-box optimization algorithms. Ideally, one would want proven improvement results that are valid at every iteration, for realistic step sizes, and for a wide variety of proposal models. Typical motivating examples, widely used in black-box global optimization, are heavy-tailed \cite{schaul2011}, and mixture \cite{maree2017, ahrari2017, he2021, ahrari2022}, proposal distributions. However, such proposals would require infinitesimally small step sizes to benefit from the available improvement results from \cite{ollivier2017, akimoto2013}, since they do not form an exponential family. Further, even for proposals within the exponential family, the results from \cite{ollivier2017, akimoto2013} do not quantify the magnitude of the improvements. Therefore, there is still a need for novel wide-ranging criteria to ensure that black-box optimization algorithms improve, and if so, how much, either in terms of reformulated problems or quantiles, that we address in this work.

Our contributions are the following:
\begin{itemize}
    \item We introduce novel divergence-based conditions, measuring, through a Kullback-Leibler (KL) or a Rényi divergence, the discrepancy between a given target distribution and successive proposals. We show that any generic algorithm satisfying those conditions improves in terms of the expectation-based reformulation of the objective at every iteration, and we quantify the improvement. Namely, if the divergence is decreasing between two consecutive proposals, then the decrease in divergence translates into a improvement both in the expectation-based reformulated objective and quantile. It is worthy to emphasize that our results do not depend on the way the next proposal is designed, making our conditions a versatile tool to study evolution strategy algorithms.
    \item We show that the IGO framework fits within the introduced divergence-decrease conditions, illustrating the wide scope of our results. In the case of the IGO reformulation of the objective, we quantify the quantile improvement that comes under our divergence-based conditions. 
    \item We go beyond the scope of the aforementioned IGO works by considering mixture and heavy-tailed proposals. We propose a novel mixture-based algorithm, reminiscent from the mixture-based CE algorithm of \cite[Example 3.2]{kroese2006}, and we show that it fits within our framework. We also propose a new algorithm for Student proposals (having heavier tails than Gaussians and including Cauchy), and we show that it satisfies our divergence-based conditions. We interpret our algorithms as the black-box global optimization counterparts of existing methods in statistics.
\end{itemize}

Let us position our contributions with respect to existing literature. Our results allow to derive new proofs for the quantile improvement in the IGO framework of \cite{ollivier2017, akimoto2013}. As we discuss later in detail, our results are stated for the IGO quantile-based reformulation of the objective. They also hold for other expectation-based reformulations of the objective such as the ones in \cite{wierstra2008, wierstra2014, brookes2022}. We furthermore quantify the improvement in terms of expectation-based reformulation and quantile, yielding more precise results than in \cite{ollivier2017, akimoto2013}. Contrary to existing works, our results can be applied on proposals beyond the exponential family, as we show in several examples, and hold without the stringent assumption of infinitesimal step sizes. 

The paper is organized as follows. We give preliminary notions about black-box global optimization problems, and algorithms to solve them, in Section \ref{section:preliminary}. We then state our main results in Section \ref{section:mainResult}, including our novel conditions for improvement, and new results obtained under these conditions. Finally, we discuss perspectives in Section \ref{section:perspectives}.

\section{Preliminary notions}
\label{section:preliminary}

Let us start with some preliminary notions on sampling-based black-box algorithms for global optimization. These algorithms sample points from a proposal distribution that is updated iteratively so that it concentrates around the solutions of the considered optimization problem. We first present how to reformulate the initial optimization problem into an optimization problem on proposal distributions. Then, we discuss algorithms to solve this resulting problem, making a particular focus on the IGO framework.

\subsection{Problem setting and reformulation}

We consider throughout the paper the generic black-box minimization problem
\begin{equation}
    \label{pblm:generalOpt}
    \minimize_{x \in \mathbb{X}} f(x),
\end{equation}
where $f:\mathbb{X} \rightarrow \mathbb{R}$ may be non-convex and can only be accessed through a black-box that, for a given $x \in \mathbb{X}$, returns the value $f(x)$. The search space $\mathbb{X}$ can be continuous, discrete, or mixed between continuous and discrete variables. We assume the existence of a measure $m$ on $\mathbb{X}$ for some $\sigma$-algebra over $\mathbb{X}$. For instance, if $\mathbb{X} = \mathbb{R}^d$, one can consider the Lebesgue measure, while if $\mathbb{X} = \mathbb{N}^d$, one can take the counting measure.

We focus in our work on algorithms that solve Problem \eqref{pblm:generalOpt} through a sampling-based approach. The aim is to construct a parametric probability distribution $p_{\theta}$ over $\mathbb{X}$ such that $p_{\theta}$ is concentrated around the minimizers of $f$ over $\mathbb{X}$. In this context, one does not search anymore for an optimal point $x \in \mathbb{X}$, but instead for an optimal parameter $\theta \in \Theta$, or alternatively, for an optimal probability distribution $p_{\theta} \in \{ p_{\theta},\,\theta \in \Theta\}$. In the following, we make the standard assumption that the considered proposals $p_{\theta}$ have a density with respect to $m$, also denoted by $p_{\theta}$. One way to transform Problem \eqref{pblm:generalOpt} into a problem over $\Theta$ is to consider the minimization of $\theta \longmapsto \mathbb{E}_{X \sim p_{\theta}} [f(X)]$. This reformulation, which has been studied in \cite{akimoto2012, malago2015} for instance, makes however the resulting algorithm sensitive to transformation of $f$. 

In this work, we focus on an alternative reformulation of Problem~\eqref{pblm:generalOpt}, which has been proposed in the context of IGO \cite{ollivier2017}, and has the advantage of ensuring more invariance properties. Let the $p_{\theta}$-$f$-quantiles at $x \in \mathbb{X}$ be defined by
\begin{align*}
    q_{\theta}^{<}(x) &= \mathbb{P}_{X \sim p_{\theta}} [ f(X) < f(x)],\\
    q_{\theta}^{\leq}(x) &= \mathbb{P}_{X \sim p_{\theta}} [ f(X) \leq f(x)].
\end{align*}
For a given $x \in \mathbb{X}$, $q_{\theta}^{<}$ and $q_{\theta}^{\leq}$ measure the mass that $p_{\theta}$ puts on points that achieve (strictly) better value of $f$ than $x$. Select next a weighting non-increasing function $w: [0,1] \rightarrow \mathbb{R}_+$. The authors of \cite{ollivier2017} then introduced the preference function $W_{\theta}^f : \mathbb{X} \rightarrow \mathbb{R}$ which is defined for any $x\in\mathbb{X}$ as
\begin{equation}
    \label{eq:defW}
    W_{\theta}^f(x) =
    \begin{cases}
        w( q_{\theta}^{\leq}(x)) \text{ if } q_{\theta}^{\leq}(x) = q_{\theta}^<(x),\\
        \frac{1}{q_{\theta}^{\leq}(x) - q_{\theta}^<(x)} \int_{q_{\theta}^<(x)}^{q_{\theta}^{\leq}(x)} w(u)du \text{ otherwise}.
    \end{cases}
\end{equation}
The function $W_{\theta}^f$ is a quantile-based rewriting of $f$ that is invariant under increasing transformation of the objective $f$, as $W_{\theta}^f \equiv W_{\theta}^{\phi \circ f}$ for any increasing function $\phi$ and $\theta \in \Theta$. Also, $W_{\theta}^f$ reflects the behavior of $f$. Indeed, consider $(x,x') \in \mathbb{X}^2$ such that $f(x) \leq f(x')$, $q_{\theta}^{\leq}(x) = q_{\theta}^<(x)$, and $q_{\theta}^{\leq}(x') = q_{\theta}^<(x')$. Then, $W_{\theta}^f(x) \geq W_{\theta}^f(x')$. Under such definitions, given a proposal with parameter $\theta' \in \Theta$, the authors of \cite{ollivier2017} considered the search for a good proposal $p_\theta$ to solve \eqref{pblm:generalOpt} as the maximization of the function $J(\cdot |\theta') : \Theta \rightarrow \mathbb{R}$ defined, for any $\theta \in \Theta$, by
\begin{equation}
    \label{eq:defJ}
    J(\theta | \theta') = \mathbb{E}_{X \sim p_{\theta}}\left[W_{\theta'}^f(X) \right].
\end{equation}
Note that $J(\theta | \theta) = Z_w$ for any $\theta \in \Theta$, with the notation $Z_w = \int _0^1 w(u)du$.

Measuring the quality of a proposal $p_{\theta}$ to solve Problem \eqref{pblm:generalOpt} using quantiles has also been proposed in the framework of the CE method \cite{kroese2006}. Given a proposal $p_{\theta}$ and a scalar $q \in (0,1)$, the CE method relies on $q$-quantiles of $f(X)$ where $X \sim p_{\theta}$, that is, any value $u \in \mathbb{R}$ such that
\begin{equation}
    \label{eq:q-quantileCEdef}
    \mathbb{P}_{X \sim p_{\theta}} [ f(X) \leq u] \geq q \text{ and } \mathbb{P}_{X \sim p_{\theta}}[f(X) \geq u] \geq 1-q.
\end{equation}
Let us denote, as in \cite{akimoto2013}, $Q_{\theta}^q(f)$ as the largest of such values,
\begin{equation}
    \label{eq:defQ_theta}
    Q_{\theta}^q(f) = \sup \{ u \in \mathbb{R} \text{ such that \eqref{eq:q-quantileCEdef} is satisfied}\}.
\end{equation}
If $x$ is sampled from $p_{\theta}$, then $f(x)$ is below $Q_{\theta}^q(f)$ with a probability greater than $q$. Therefore, a good proposal $p_{\theta}$ to solve Problem \eqref{pblm:generalOpt} should be such that $Q_{\theta}^q(f)$ is as low as possible. Remark that $Q_{\theta}^q(f)$ only depends on the current proposal, contrary to $J$.

It is actually possible to relate the behavior of the quantities $J(\theta | \theta')$ from the IGO framework, and $Q_{\theta}^q(f)$ from CE methods, for a particular case of weighting scheme $w$. This is done in \cite{akimoto2013} where the authors showed that an increase in term of $\theta \longmapsto J(\theta | \theta')$ relates to a decrease in terms of $\theta \longmapsto Q_{\theta}^q(f)$, as stated in the lemma hereafter.
\begin{lemma}[Lemma 8 in \cite{akimoto2013}]
    \label{lemma:improvementJ_impliesQuantileImprovement}
    Consider the weighting function $w(u) = \delta_{u \leq q}(u)$ with $q \in (0,1)$. If $(\theta, \theta') \in \Theta^2$ satisfies the increase condition
    \begin{equation}
        \label{eq:strictIncreaseJ}
        J(\theta | \theta') > J(\theta' | \theta') = Z_w,
    \end{equation}
    then we have $Q_{\theta}^q(f) \leq Q_{\theta'}^q(f)$. If further, $\mathbb{P}_{X \sim p_{\theta}}[f(X) = Q_{\theta'}^q(f)] = 0$, then $Q_{\theta}^q(f) < Q_{\theta'}^q(f)$.
\end{lemma}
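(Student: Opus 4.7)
The plan is to exploit two properties of the preference function $W_{\theta'}^f$ under the specific weight $w = \mathbf{1}_{[0,q]}$: it is always bounded by $1$, and, crucially, it vanishes outside the sublevel set $\{x : f(x) \leq Q_{\theta'}^q(f)\}$. Integrating these against $p_\theta$ will turn the hypothesis $J(\theta|\theta') > Z_w = q$ into the mass bound $\mathbb{P}_{X \sim p_\theta}[f(X) \leq Q_{\theta'}^q(f)] > q$, from which both conclusions of the lemma can be read off directly from definition \eqref{eq:defQ_theta}.

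Setting $Q' = Q_{\theta'}^q(f)$, I would first establish the pointwise inequality $W_{\theta'}^f(x) \leq \mathbf{1}\{f(x) \leq Q'\}$ for every $x \in \mathbb{X}$. The inequality is automatic when $f(x) \leq Q'$ since $w \leq 1$, so only $f(x) > Q'$ requires work. For such $x$, the inclusion $\{f(X) \leq Q'\} \subseteq \{f(X) < f(x)\}$ together with the first condition of \eqref{eq:q-quantileCEdef} at $u = Q'$ gives $q_{\theta'}^<(x) \geq q$. I would then split on the two branches of \eqref{eq:defW}: if $q_{\theta'}^\leq(x) > q_{\theta'}^<(x)$, the averaged integral is over $[q_{\theta'}^<(x), q_{\theta'}^\leq(x)] \subseteq [q,+\infty)$, where $\mathbf{1}_{[0,q]}$ vanishes almost everywhere, hence $W_{\theta'}^f(x) = 0$. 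The remaining configuration $q_{\theta'}^\leq(x) = q_{\theta'}^<(x) = q$ is the one delicate subcase: it would yield $\mathbb{P}_{X \sim p_{\theta'}}[f(X) \geq f(x)] = 1-q$, so $f(x)$ would satisfy both conditions in \eqref{eq:q-quantileCEdef}, forcing $f(x) \leq Q'$ by \eqref{eq:defQ_theta} and contradicting the assumption $f(x) > Q'$. This edge case, excluded by the supremum in the definition of $Q'$, is the main subtlety of the proof; everything else is bookkeeping with cumulative distribution functions.

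Integrating the pointwise bound against $p_\theta$ gives $J(\theta|\theta') \leq \mathbb{P}_{X \sim p_\theta}[f(X) \leq Q']$, so the hypothesis \eqref{eq:strictIncreaseJ} yields $\mathbb{P}_{X \sim p_\theta}[f(X) \leq Q'] > q$. To conclude $Q_\theta^q(f) \leq Q'$, I would observe that any $v > Q'$ satisfies $\mathbb{P}_{X \sim p_\theta}[f(X) \geq v] \leq 1 - \mathbb{P}_{X \sim p_\theta}[f(X) \leq Q'] < 1-q$, violating the second condition in \eqref{eq:q-quantileCEdef}; hence no such $v$ belongs to the set whose supremum defines $Q_\theta^q(f)$.

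For the strict inequality, the atom-free hypothesis $\mathbb{P}_{X \sim p_\theta}[f(X) = Q'] = 0$ upgrades the previous bound to $\mathbb{P}_{X \sim p_\theta}[f(X) < Q'] > q$. Since $\{f(X) < Q'\} = \bigcup_{n} \{f(X) \leq v_n\}$ for any sequence $v_n \uparrow Q'$, monotone continuity of measures produces some $v_0 < Q'$ with $\mathbb{P}_{X \sim p_\theta}[f(X) \leq v_0] > q$. The argument of the previous paragraph, applied with $v_0$ in place of $Q'$, then yields $Q_\theta^q(f) \leq v_0 < Q'$, completing the proof.
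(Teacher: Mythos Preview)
Your proposal is correct and follows essentially the same route as the one the paper attributes to \cite[Lemma 8]{akimoto2013}: the key step is precisely that $W_{\theta'}^f(x)=0$ whenever $f(x)>Q_{\theta'}^q(f)$, yielding $J(\theta|\theta')\leq \mathbb{P}_{X\sim p_\theta}[f(X)\leq Q_{\theta'}^q(f)]$, which the paper invokes verbatim in the proof of Proposition~\ref{prop:improvementJ_QuantileImprovementQantitative}. Your handling of the edge case $q_{\theta'}^{\leq}(x)=q_{\theta'}^{<}(x)=q$ via the supremum in \eqref{eq:defQ_theta}, and your derivation of the strict inequality by continuity from below, are both clean and correct.
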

The above result gives insights into the design and study of theoretically sounded black-box global optimization algorithms, for either discrete or continuous optimization. Indeed, showing that consecutive proposals achieve the increase condition \eqref{eq:strictIncreaseJ} allows to apply Lemma \ref{lemma:improvementJ_impliesQuantileImprovement} yielding a proposal with more mass where the objective function is low.

\subsection{The information-geometric optimization algorithm}

We now recall here the information-geometric (IGO) framework from \cite{ollivier2017, akimoto2013}. The latter is an iterative proposal construction algorithm, explicitly designed to achieve the increase condition \eqref{eq:strictIncreaseJ} at every iteration. The IGO framework has been shown in \cite{ollivier2017} to recover many existing algorithms to solve Problem \eqref{pblm:generalOpt}, both in discrete or continuous domains. Among the algorithms recovered by the IGO framework, let us mention the CE algorithm of \cite{kroese2006}.

The quantity $J(\theta | \theta')$ defined in \eqref{eq:defJ} is generally an intractable integral that needs in practice to be approximated with sampling. Throughout this paper, we focus on idealized algorithms that are deterministic, corresponding to the limit of an infinite number of samples. In such idealized setting, we only consider discrete-time updates since they are closer to a practical implementation than continuous flows. Two types of updates have been proposed in \cite{akimoto2013, ollivier2017} to satisfy the increase condition \eqref{eq:strictIncreaseJ}, leading to two distinct IGO-like algorithms, that we will recall here.

The first algorithm in \cite{ollivier2017, akimoto2013} is based on natural gradient ascent updates. Consider an iteration $k \in \mathbb{N}$, with $\theta_k$ parametrizing the current proposal, and the objective function being $J(\cdot | \theta_k)$. The natural gradient of $J(\cdot | \theta_k)$ at $\theta$ is the quantity $\widetilde{\nabla}_{\theta} J(\theta | \theta_k) = I(\theta)^{-1} \nabla_{\theta} J(\theta  | \theta_k)$, where $I(\theta) = -\mathbb{E}_{X \sim p_{\theta}}[ \nabla^2_{\theta} \ln p_{\theta}(X)]$ is the Fisher information matrix of $p_{\theta}$. Given the above gradient expression, iterating a gradient ascent scheme over $k \in \mathbb{N}$ leads to Algorithm \ref{alg:IGO_naturalGrad}. We remark that natural gradient updates have been used in other contexts than IGO, see for instance \cite{wierstra2014}.

\begin{algorithm}[H]
\caption{IGO algorithm (natural gradient update)}\label{alg:IGO_naturalGrad}
\begin{algorithmic}
\STATE Initialize $\theta_0$ and choose the step size $\tau > 0$.
\FOR{$k=0,\dots$}
    \STATE Update $\theta_{k+1}$ such that
    \begin{equation}
        \label{eq:IGO_naturalGradient}
        \theta_{k+1} = \theta_k + \tau \widetilde{\nabla}_{\theta} J(\theta | \theta_k)_{|\theta = \theta_k}.
     \end{equation}
\ENDFOR
\end{algorithmic}
\end{algorithm}

The second algorithm proposed in \cite{ollivier2017, akimoto2013} estimates the proposal parameters by performing a weighted maximum likelihood update at every iteration. We provide its description in Algorithm \ref{alg:IGO-ML}. The CE algorithm of \cite{kroese2006} is recovered as a special case when the step size is $\tau = 1$ and the weighting function is $w(u) = \delta_{u \leq q}(u)$~\cite{ollivier2017}.

\begin{algorithm}[H]
\caption{IGO algorithm (IGO-ML update)}\label{alg:IGO-ML}
\begin{algorithmic}
\STATE Initialize $\theta_0$ and choose the step size $\tau > 0$.
\FOR{$k=0,\dots$}
    \STATE Update $\theta_{k+1}$ such that 
    \begin{equation}
        \label{eq:IGO-ML}
        \theta_{k+1} = \argmax_{\theta \in \Theta} \left( (1-\tau) \int \ln (p_{\theta}(x))p_{\theta_k}(x)m(dx) + \tau \int W_{\theta_k}^f(x) \ln p_{\theta}(x) p_{\theta_k}(x)m(dx) \right).
    \end{equation}
\ENDFOR
\end{algorithmic}
\end{algorithm}

The theoretical properties of Algorithms \ref{alg:IGO_naturalGrad} and \ref{alg:IGO-ML} have been studied in \cite{ollivier2017, akimoto2013}. Algorithm \ref{alg:IGO-ML} achieves the increase condition \eqref{eq:strictIncreaseJ}, that is $J(\theta_{k+1} | \theta_k) > Z_w$ at every iteration $k\in \mathbb{N}$, for step sizes $\tau \in (0,1]$ \cite[Theorem 6]{akimoto2013}. This result gives in turn improvement guarantees for the CE algorithm thanks to Lemma \ref{lemma:improvementJ_impliesQuantileImprovement}. Algorithm \ref{alg:IGO_naturalGrad} has been shown to satisfy the increase condition \eqref{eq:strictIncreaseJ} for sufficiently small step sizes \cite[Proposition 7]{ollivier2017}. Moreover, Algorithms \ref{alg:IGO_naturalGrad} and \ref{alg:IGO-ML} have been shown to coincide when $\{ p_{\theta},\,\theta \in \Theta \}$ forms an exponential family \cite{barndorff-nielsen2014}, ensuring that Algorithm \ref{alg:IGO_naturalGrad} satisfies \eqref{eq:strictIncreaseJ} for $\tau \in (0,1]$ in this case (see \cite[Corollary 7]{akimoto2013}).

Algorithms \ref{alg:IGO_naturalGrad} and \ref{alg:IGO-ML} coincide with many existing black-box global optimization algorithms on discrete or continuous domains \cite[Section 5]{ollivier2017}, allowing to get improvement guarantees for these algorithms as well. However, the aforementioned study requires, as a preliminary step, to show that the considered algorithms fit within the IGO framework, which is not always possible nor straightforward. In the next section, we present our contribution, that aims at giving novel broader conditions under which the increase condition \eqref{eq:strictIncreaseJ} is satisfied. This allows, in particular, to exhibit new improvement guarantees beyond the IGO framework, the latter being retrieved as a special case.

\section{A general divergence-based condition for quantile improvement}
\label{section:mainResult}

We present our main results in this section. We start in Section \ref{sec:31} with the introduction of novel, divergence-based, conditions. We show that they imply the increase condition \eqref{eq:strictIncreaseJ}. We go further by quantifying the improvements in terms of reformulated objective and quantile. We then show in Section \ref{sec:32} that IGO algorithms satisfy our conditions, allowing us to provide a new and refined perspective on these methods. We finally exploit our divergence-based conditions to show new improvement guarantees for algorithms using proposals that do not form an exponential family. Namely, in Section \ref{sec:33}, we study a mixture-based algorithm and discuss his tight links with the mixture-based CE algorithm of \cite[Example 3.2]{kroese2006}. In Section \ref{sec:34}, we study an algorithm with heavy-tailed proposals, namely Student distributions with arbitrary degree of freedom parameter.

\subsection{Quantile improvement with divergence-decreasing steps}
\label{sec:31}

The goal of this section is to show that the increase condition \eqref{eq:strictIncreaseJ}, i.e., the theoretical guarantee achieved in the IGO framework, can be expressed as a consequence of divergence-based conditions, that we detail below. Combining these conditions with Lemma \ref{lemma:improvementJ_impliesQuantileImprovement} then yields a quantile improvement result. We also go further and quantify the improvement on the reformulated objective and quantile that come from our divergence-based conditions.

\subsubsection{Proposed divergence-based condition}
Our divergence-based conditions can be interpreted as the search for a proposal closer to a specific target distribution than the previous proposal, in the sense of the Kullback-Leibler or Rényi divergence. Let us start by specifying the target probability distribution we are going to consider. For $\theta \in \Theta$, we introduce $\pi_{\theta}^f$, the probability density with respect to $m$ defined for any $x \in \mathbb{X}$ by
\begin{equation}
    \label{eq:targetConstruction}
    \pi_{\theta}^f(x) = \frac{1}{Z_w} W_{\theta}^f(x) p_{\theta}(x).
\end{equation}
When $w(u) = \delta_{u \leq q}(u)$ for some $q \in (0,1)$, $\pi_{\theta}^f$ is a truncated version of $p_{\theta}$ with support being the points $x \in \mathbb{X}$ such that $q_{\theta}^<(x) < q$, meaning that areas of $\mathbb{X}$ where the values reached by $f$ are too high are given zero mass. Let a given iteration $k\in\mathbb{N}$. We aim at measuring the discrepancy between the target $\pi_{\theta_k}^f$ and either the current proposal, or the next one. This discrepancy is measured using the KL or a Rényi divergence, with $\alpha \in (0,1)\cup (1,+\infty)$. These are defined, respectively, for any probability densities $p_1, p_2$ with respect to $m$ by
\begin{align*}
    KL(p_1,p_2) &= \int \ln \left( \frac{p_1(x)}{p_2(x)}\right)p_1(x)m(dx),\\
    D_{\alpha}(p_1,p_2) &= \frac{1}{\alpha-1} \ln \left(\int p_1(x)^{\alpha} p_2(x)^{1-\alpha}m(dx) \right).
\end{align*}
If for some $x \in \mathbb{X}$, $p_1(x) = 0$, then we use $\ln(p_1(x))p_1(x) = 0$ (see \cite[Definition 7.1]{polyanskiy2023} for more details on these singular cases). Note that we have $D_{\alpha}(p_1,p_2) \xrightarrow[\alpha \rightarrow 1]{} KL(p_1,p_2)$ \cite[Theorem 5]{vanErven2014}, so that the KL divergence can be viewed as a limit case of the Rényi divergence.

We are now ready to state our first result, obtained when using the KL divergence to measure the discrepancy between probability densities. We show hereafter that, when a generic algorithm constructs its next proposal so that it gets closer, by a certain amount, to the target $\pi_{\theta_k}^f$ defined in \eqref{eq:targetConstruction}, then it improves upon the reformulated objective defined in \eqref{eq:defJ}, by an amount that we quantify.

\begin{proposition}
    \label{prop:KLDecreaseImpliesIntDecrease}
    Let $k \in \mathbb{N}$ and $\theta_k \in \Theta$. Suppose that $\pi_{\theta_k}^f$ is given by Equation \eqref{eq:targetConstruction}, and $p_{\theta_{k+1}}$ satisfies, for some $\Delta_k \in \mathbb{R}$,
    \begin{equation}
        \label{eq:KLStrictDecrease}
        KL(\pi_{\theta_k}^f, p_{\theta_{k+1}}) + \Delta_k \leq  KL(\pi_{\theta_k}^f, p_{\theta_k}).
    \end{equation}
    Then, $J(\theta_{k+1} | \theta_k) \geq \exp(\Delta_k) J(\theta_k | \theta_k) = \exp(\Delta_k) Z_w$. In particular, $(\theta_{k+1}, \theta_k)$ satisfy the increase condition \eqref{eq:strictIncreaseJ}, i.e., $J(\theta_{k+1} | \theta_k) > Z_w$ with $J$ defined in \eqref{eq:defJ}, if $\Delta_k > 0$.
\end{proposition}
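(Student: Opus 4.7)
The plan is to unpack the KL--decrease condition into an integral inequality involving the ratio $p_{\theta_{k+1}}/p_{\theta_k}$ against the target $\pi_{\theta_k}^f$, and then use concavity of $\ln$ (Jensen) to relate that integral to $J(\theta_{k+1}\mid\theta_k)$.

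More concretely, I would first rewrite the left-hand side of \eqref{eq:KLStrictDecrease} as
\[
KL(\pi_{\theta_k}^f, p_{\theta_k}) - KL(\pi_{\theta_k}^f, p_{\theta_{k+1}}) = \int \ln\!\left(\frac{p_{\theta_{k+1}}(x)}{p_{\theta_k}(x)}\right) \pi_{\theta_k}^f(x)\, m(dx),
\]
so that the hypothesis becomes
\[
\Delta_k \leq \int \ln\!\left(\frac{p_{\theta_{k+1}}(x)}{p_{\theta_k}(x)}\right) \pi_{\theta_k}^f(x)\, m(dx).
\]
Next I would apply Jensen's inequality to the concave function $\ln$ with the probability measure $\pi_{\theta_k}^f\, m$, which gives
\[
\int \ln\!\left(\frac{p_{\theta_{k+1}}(x)}{p_{\theta_k}(x)}\right) \pi_{\theta_k}^f(x)\, m(dx) \leq \ln\!\left( \int \frac{p_{\theta_{k+1}}(x)}{p_{\theta_k}(x)} \pi_{\theta_k}^f(x)\, m(dx)\right).
\]
Then I would substitute the definition \eqref{eq:targetConstruction} of $\pi_{\theta_k}^f$ inside the right-hand side; the factor $p_{\theta_k}(x)$ cancels, and what remains is $\frac{1}{Z_w}\int W_{\theta_k}^f(x) p_{\theta_{k+1}}(x)\, m(dx) = J(\theta_{k+1}\mid\theta_k)/Z_w$ by \eqref{eq:defJ}. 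Chaining the two inequalities and exponentiating yields $J(\theta_{k+1}\mid\theta_k)\geq \exp(\Delta_k)\, Z_w$, and since $J(\theta_k\mid\theta_k)=Z_w$, this recovers \eqref{eq:strictIncreaseJ} as soon as $\Delta_k > 0$.

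There is no real obstacle: the argument is two lines of rewriting plus one application of Jensen's inequality. The only mild care point is to justify that the integrands are well-defined where $\pi_{\theta_k}^f$ or $p_{\theta_k}$ vanish, which is handled by the convention $\ln(p)\,p = 0$ recalled in the text and by the fact that $\pi_{\theta_k}^f$ is absolutely continuous with respect to $p_{\theta_k}$ by construction, so the ratio $p_{\theta_{k+1}}/p_{\theta_k}$ only needs to be considered on the support of $\pi_{\theta_k}^f$. If one wished, one could also cover the case $KL(\pi_{\theta_k}^f, p_{\theta_k}) = +\infty$ separately, since then the inequality is vacuous unless $KL(\pi_{\theta_k}^f, p_{\theta_{k+1}}) = +\infty$ too, in which case nothing needs to be proved.
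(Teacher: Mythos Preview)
Your proposal is correct and follows essentially the same approach as the paper: both rewrite the KL--decrease condition as a lower bound on $\int \ln(p_{\theta_{k+1}}/p_{\theta_k})\,\pi_{\theta_k}^f\,m(dx)$, apply Jensen's inequality to pass to $\ln\!\int (p_{\theta_{k+1}}/p_{\theta_k})\,\pi_{\theta_k}^f\,m(dx)$, and then identify the resulting integral with $J(\theta_{k+1}\mid\theta_k)/Z_w$ via the definition of $\pi_{\theta_k}^f$. The only cosmetic difference is that the paper carries the minus signs through while you flip them at the start.
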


\begin{proof}
    By construction of $\pi_{\theta_k}^f$, we can rewrite condition \eqref{eq:KLStrictDecrease} as
    \begin{equation*}
        \int \ln \left( \frac{W_{\theta_k}^f(x) p_{\theta_k}(x)}{Z_w p_{\theta_{k+1}(x)}} \right) \pi_{\theta_k}^f(x) m(dx) + \Delta_k\\ \leq \int \ln \left( \frac{W_{\theta_k}^f(x)}{Z_w} \right) \pi_{\theta_k}^f(x) m(dx),
    \end{equation*}
    and remark that it is equivalent to having 
    \begin{equation*}
        - \int \ln \left( \frac{p_{\theta_{k+1}(x)}}{p_{\theta_k}(x)} \right) \pi_{\theta_k}^f(x)m(dx) \leq -\Delta_k.
    \end{equation*}
    We then get from Jensen's inequality that 
    \begin{equation*}
        - \ln \left( \int \frac{p_{\theta_{k+1}}(x)}{p_{\theta_k}(x)}  \pi_{\theta_k}^f(x) m(dx) \right) \\ \leq - \int \ln \left( \frac{p_{\theta_{k+1}(x)}}{p_{\theta_k}(x)} \right) \pi_{\theta_k}^f(x)m(dx),
    \end{equation*}
    implying that 
    \begin{equation*}
        \int \frac{p_{\theta_{k+1}(x)}}{p_{\theta_k}(x)} \pi_{\theta_k}^f(x)m(dx) \geq \exp(\Delta_k).
    \end{equation*}
    Since by definition, $\frac{\pi_{\theta_k}^f(x)}{p_{\theta_k}(x)} = \frac{W_{\theta_k}^f(x)}{Z_w}$ for any $x \in \mathbb{X}$, it comes that $J(\theta_{k+1} | \theta_k) \geq \exp(\Delta_k) Z_w$.
\end{proof}

We now state a second similar result, that arises when one now measures the discrepancy between the target density and the proposals using a Rényi divergence.

\begin{proposition}
    \label{prop:alphaDivDecreaseImpliesIntDecrease}
    Let $k \in \mathbb{N}$, $\theta_k \in \Theta$ and suppose that $W_{\theta_k}^f$ takes values in $\{0,1\}$. Suppose that $\pi_{\theta_k}^f$ is given by Equation \eqref{eq:targetConstruction} and that $p_{\theta_{k+1}}$ satisfies, for some $\Delta_k \in \mathbb{R}$,
    \begin{equation}
        \label{eq:alphaDivStrictDecrease}
        D_{\alpha}(\pi_{\theta_k}^f, p_{\theta_{k+1}}) + \Delta_k \leq D_{\alpha}(\pi_{\theta_k}^f, p_{\theta_k}),
    \end{equation}
    for some $\alpha \in (0,1)$. Then, $J(\theta_{k+1} | \theta_k) \geq \exp(\Delta_k) J(\theta_k | \theta_k) = \exp(\Delta_k) Z_w$. In particular, $(\theta_{k+1}, \theta_k)$ satisfy the increase condition \eqref{eq:strictIncreaseJ} if $\Delta_k > 0$.
\end{proposition}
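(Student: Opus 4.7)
The plan is to mirror the argument for Proposition \ref{prop:KLDecreaseImpliesIntDecrease}, replacing Jensen's inequality applied to the convex function $-\ln$ by Jensen's inequality applied to the concave function $t \mapsto t^{1-\alpha}$ on $\mathbb{R}_+$ (concave because $\alpha \in (0,1)$ entails $1-\alpha \in (0,1)$). The hypothesis $W_{\theta_k}^f(\mathbb{X}) \subseteq \{0,1\}$ is what makes the Rényi version clean: it allows to replace $W_{\theta_k}^f(x)^{\alpha}$ by $W_{\theta_k}^f(x)$ inside any integrand, and it identifies the support of $\pi_{\theta_k}^f$ with (a subset of) the level set $\{W_{\theta_k}^f=1\}$.

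First, I would compute $D_{\alpha}(\pi_{\theta_k}^f, p_{\theta_k})$ exactly. Substituting the definition \eqref{eq:targetConstruction} of $\pi_{\theta_k}^f$ and using $W_{\theta_k}^f(x)^{\alpha} = W_{\theta_k}^f(x)$, the Rényi integral $\int \pi_{\theta_k}^f(x)^{\alpha} p_{\theta_k}(x)^{1-\alpha} m(dx)$ collapses to $Z_w^{-\alpha}\int W_{\theta_k}^f(x) p_{\theta_k}(x) m(dx) = Z_w^{1-\alpha}$, so that $D_{\alpha}(\pi_{\theta_k}^f, p_{\theta_k}) = -\ln Z_w$. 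Next, I would lower-bound $D_{\alpha}(\pi_{\theta_k}^f, p_{\theta_{k+1}})$. Writing the Rényi integrand as $\pi_{\theta_k}^f(x)\bigl(p_{\theta_{k+1}}(x)/\pi_{\theta_k}^f(x)\bigr)^{1-\alpha}$ on $\{\pi_{\theta_k}^f > 0\}$ and applying Jensen's inequality to $t \mapsto t^{1-\alpha}$ against the probability measure $\pi_{\theta_k}^f\,m$ would yield
\[
\int \pi_{\theta_k}^f(x)^{\alpha} p_{\theta_{k+1}}(x)^{1-\alpha} m(dx) \leq \left( \int_{\{\pi_{\theta_k}^f > 0\}} p_{\theta_{k+1}}(x) m(dx) \right)^{1-\alpha} \leq J(\theta_{k+1} | \theta_k)^{1-\alpha},
\]
where the last step uses $\mathbf{1}_{\{\pi_{\theta_k}^f > 0\}} \leq W_{\theta_k}^f$ (thanks to the $\{0,1\}$ assumption). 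Multiplying by $\frac{1}{\alpha-1} < 0$ after taking $\ln$ reverses the inequality, giving $D_{\alpha}(\pi_{\theta_k}^f, p_{\theta_{k+1}}) \geq -\ln J(\theta_{k+1} | \theta_k)$.

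Chaining these two facts with the hypothesis \eqref{eq:alphaDivStrictDecrease} would produce
\[
-\ln J(\theta_{k+1} | \theta_k) \leq D_{\alpha}(\pi_{\theta_k}^f, p_{\theta_{k+1}}) \leq D_{\alpha}(\pi_{\theta_k}^f, p_{\theta_k}) - \Delta_k = -\ln Z_w - \Delta_k,
\]
hence $J(\theta_{k+1} | \theta_k) \geq \exp(\Delta_k)\, Z_w$, which is the announced bound; the strict inequality \eqref{eq:strictIncreaseJ} follows whenever $\Delta_k > 0$. The only genuinely delicate point is the combined use of the $\{0,1\}$ assumption in the two places where the KL-based proof used nothing more than linearity of $\ln$: the exact equality $D_{\alpha}(\pi_{\theta_k}^f, p_{\theta_k}) = -\ln Z_w$ and the support identification that brings $J(\theta_{k+1}|\theta_k)$ into the Jensen bound. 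Without this assumption the integral of $W_{\theta_k}^f(x)^{\alpha} p_{\theta_k}(x)$ does not simplify to $Z_w$, and one would end up with a bound involving this nontrivial moment rather than $J(\theta_{k+1}|\theta_k)$ itself, which explains why the statement is restricted to indicator-type weighting functions.
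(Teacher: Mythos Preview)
Your proof is correct and follows essentially the same approach as the paper: both use the $\{0,1\}$ hypothesis to reduce $W_{\theta_k}^f(x)^{\alpha}$ to $W_{\theta_k}^f(x)$ and then apply Jensen's inequality for the concave map $t\mapsto t^{1-\alpha}$ against the probability measure $\pi_{\theta_k}^f\,m$. The only cosmetic difference is that the paper applies Jensen to the ratio $p_{\theta_{k+1}}/p_{\theta_k}$, which gives $\int (p_{\theta_{k+1}}/p_{\theta_k})\,\pi_{\theta_k}^f = J(\theta_{k+1}\mid\theta_k)/Z_w$ directly and so avoids your extra support-identification step $\mathbf{1}_{\{\pi_{\theta_k}^f>0\}}\leq W_{\theta_k}^f$.
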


\begin{proof}
    By definition of $\pi_{\theta_k}^f$ and since by assumption, $W_{\theta_k}^f(x)^{\alpha} = W_{\theta_k}^f(x)$ for any $x \in \mathbb{X}$, Equation \eqref{eq:alphaDivStrictDecrease} is equivalent to
    \begin{align*}
        \frac{1}{\alpha - 1} \ln \left(\int \left( \frac{p_{\theta_{k+1}}(x)}{p_{\theta_k}(x)} \right)^{1-\alpha} W_{\theta_k}^f(x) p_{\theta_k}(x) m(dx) \right) + \Delta_k  &\leq \frac{1}{\alpha-1} \ln \left(\int W_{\theta_k}^f(x) p_{\theta_k}(x)m(dx) \right)\\ 
        &= \frac{1}{\alpha-1} \ln Z_w.
    \end{align*}
    We deduce from there that
    \begin{equation*}
        \int \left( \frac{p_{\theta_{k+1}}(x)}{p_{\theta_k}(x)} \right)^{1-\alpha} \pi_{\theta_k}^f(x) m(dx) \geq \exp((1-\alpha)\Delta_k).
    \end{equation*}
    Finally, since $u \longmapsto u^{1-\alpha}$ is concave due to the assumption on $\alpha$, we apply Jensen's inequality to obtain that
    \begin{equation*}
        \int \frac{p_{\theta_{k+1}}(x)}{p_{\theta_k}(x)} \pi_{\theta_k}^f(x) m(dx) \geq \exp(\Delta_k),
    \end{equation*}
    showing by definition of $\pi_{\theta_k}^f$, that $\int W_{\theta_k}^f(x) p_{\theta_{k+1}}(x) m(dx) \geq \exp(\Delta_k) Z_w$, and hence establishing the result.
\end{proof}

Propositions \ref{prop:KLDecreaseImpliesIntDecrease} and \ref{prop:alphaDivDecreaseImpliesIntDecrease} establish divergence-decrease conditions under which the increase condition \eqref{eq:strictIncreaseJ} is satisfied. Note that the construction mechanism of $p_{\theta_{k+1}}$ does not intervene in our result, while in \cite{ollivier2017, akimoto2013}, the increase condition \eqref{eq:strictIncreaseJ} was achieved for specific algorithms only.

\begin{remark}
    \label{remark:otherWeighting}
    Results equivalent to Propositions \ref{prop:KLDecreaseImpliesIntDecrease} and \ref{prop:alphaDivDecreaseImpliesIntDecrease} can be stated for other expectation-based reformulations of Problem \eqref{pblm:generalOpt} (under very mild hypotheses). For instance, consider the minimization over $\Theta$ of $J:\theta \longmapsto \mathbb{E}_{X \sim p_{\theta}}[\phi(f(X))]$ for some transform $\phi:\mathbb{R} \rightarrow \mathbb{R}_+$ (see for instance \cite{wierstra2008}). Consider as well the tilted densities $\pi^f_{\theta}$ defined for $\theta \in \Theta$ by $\pi^f_{\theta}(x) \propto \phi(f(x))p_{\theta}(x)$ for any $x \in \mathbb{X}$. Being able to define the probability density $\pi_{\theta}^f$ in this way is the only requirement on $\phi$. Then, decrease conditions of the form \eqref{eq:KLStrictDecrease}, or \eqref{eq:alphaDivStrictDecrease} if $\phi$ takes values in $\{0,1\}$, imply that
    \begin{equation*}
        \mathbb{E}_{X \sim p_{\theta_{k+1}}}[\phi(f(X))] \geq \exp(\Delta_k) \mathbb{E}_{X \sim p_{\theta_k}}[\phi(f(X))],
    \end{equation*}
    translating the improvement in terms of divergence to an improvement in the reformulation of Problem \eqref{pblm:generalOpt}.
\end{remark}

\subsubsection{Quantile improvement quantification}
We now present an additional result that quantifies how an improvement in term of the reformulation $J( \theta | \theta_k)$, defined in \eqref{eq:defJ}, results in an improvement in terms of $Q_{\theta}^q(f)$, defined in \eqref{eq:defQ_theta}. As we explained in the previous Section, in the particular case when $w(u) = \delta_{u \leq q}(u)$, there is a link between $J( \theta | \theta_k)$ and $Q_{\theta}^q(f)$. Our result can thus be seen as a quantitative and more precise version of Lemma \ref{lemma:improvementJ_impliesQuantileImprovement}. Although the previous results hold in the continuous and discrete settings, this result hereafter requires assumptions on $f$ and $\mathbb{X}$ that will hold for most continuous optimization problems but will usually not hold for discrete problems.

\begin{proposition}
    \label{prop:improvementJ_QuantileImprovementQantitative}
    Assume that $w(u) = \delta_{u \leq q}(u)$ for some $q \in (0,1)$. Let $k \in \mathbb{N}$ and $(\theta_k, \theta_{k+1}) \in \Theta^2$ such that $J(\theta_{k+1} | \theta_k ) \geq \exp(\Delta_k) Z_w$ with $\exp(\Delta_k) \mathbb{P}_{X \sim p_{\theta_{k+1}}}[f(X) \leq Q_{\theta_{k+1}}^q(f)] \in [0,1]$. Suppose that $F_{\theta_{k+1}}: u \longmapsto \mathbb{P}_{X \sim p_{\theta_{k+1}}}[f(X) \leq u]$ is continuous and strictly monotonic, and hence bijective from the range of $f$ (which is thus an interval of $\mathbb{R}$) to $[0,1]$. Then, we have
    \begin{equation}
        \label{eq:quantileChangeQuantitative}
        Q_{\theta_k}^q(f) \geq F_{\theta_{k+1}}^{-1}\left(\exp(\Delta_k) F_{\theta_{k+1}}(Q_{\theta_{k+1}}^q(f))\right).
    \end{equation}
    If $F_{\theta_{k+1}}$ is bijective and $\mathcal{C}^1$ in a neighbourhood of $Q_{\theta_{k+1}}^q(f)$ with $F_{\theta_{k+1}}'(Q_{\theta_{k+1}}^q(f)) \neq 0$, then
    \begin{equation}
        \label{eq:quantileChangeLinearized}
        Q_{\theta_k}^q(f) \geq Q_{\theta_{k+1}}^q(f) + q \Delta_k \left(F_{\theta_{k+1}}^{-1}\right)'(q) + o(\Delta_k^2)
    \end{equation}
    for $\Delta_k$ small enough, with $\left(F_{\theta_{k+1}}^{-1}\right)'(q) > 0$.
\end{proposition}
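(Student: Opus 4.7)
The plan is to translate the assumed improvement of $J(\theta_{k+1}\mid\theta_k)$ into an inequality about the CDF $F_{\theta_{k+1}}$ evaluated at the two quantiles, then invert using the monotonicity hypothesis, and finally Taylor-expand.

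First I would show that, for the weighting $w(u) = \delta_{u \leq q}(u)$, the preference function satisfies $W^f_{\theta_k}(x) \leq \mathbb{1}[f(x) \leq Q^q_{\theta_k}(f)]$ for every $x \in \mathbb{X}$. Indeed, if $W^f_{\theta_k}(x) > 0$ then the interval $[q^<_{\theta_k}(x),q^{\leq}_{\theta_k}(x)]$ intersects $[0,q]$, which forces $q^<_{\theta_k}(x) \leq q$; combined with the definition \eqref{eq:defQ_theta} of $Q^q_{\theta_k}(f)$ this yields $f(x) \leq Q^q_{\theta_k}(f)$. Integrating this pointwise bound against $p_{\theta_{k+1}}$ gives
\begin{equation*}
    J(\theta_{k+1}\mid\theta_k) \leq \mathbb{P}_{X \sim p_{\theta_{k+1}}}[f(X) \leq Q^q_{\theta_k}(f)] = F_{\theta_{k+1}}(Q^q_{\theta_k}(f)).
\end{equation*}

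Next I would use the assumed continuity and strict monotonicity of $F_{\theta_{k+1}}$ to rewrite $Z_w = q$ as $F_{\theta_{k+1}}(Q^q_{\theta_{k+1}}(f))$; indeed, under these assumptions the definition \eqref{eq:defQ_theta} implies $F_{\theta_{k+1}}(Q^q_{\theta_{k+1}}(f)) = q$. Combining with the assumption $J(\theta_{k+1}\mid\theta_k) \geq \exp(\Delta_k) Z_w$ and the bound above, I obtain
\begin{equation*}
    F_{\theta_{k+1}}(Q^q_{\theta_k}(f)) \geq \exp(\Delta_k) F_{\theta_{k+1}}(Q^q_{\theta_{k+1}}(f)).
\end{equation*}
Since $F_{\theta_{k+1}}$ is bijective from the range of $f$ to $[0,1]$, its inverse $F_{\theta_{k+1}}^{-1}$ exists and is strictly increasing; applying it to both sides (the right-hand side lies in $[0,1]$ by hypothesis) yields \eqref{eq:quantileChangeQuantitative}.

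For the linearization \eqref{eq:quantileChangeLinearized}, I would set
\begin{equation*}
    g(\delta) = F_{\theta_{k+1}}^{-1}\bigl(\exp(\delta) F_{\theta_{k+1}}(Q^q_{\theta_{k+1}}(f))\bigr),
\end{equation*}
observe $g(0) = Q^q_{\theta_{k+1}}(f)$, and use the $\mathcal{C}^1$ assumption with $F_{\theta_{k+1}}'(Q^q_{\theta_{k+1}}(f)) \neq 0$ to invoke the inverse function theorem: $F_{\theta_{k+1}}^{-1}$ is $\mathcal{C}^1$ at $q$ with $(F_{\theta_{k+1}}^{-1})'(q) = 1/F_{\theta_{k+1}}'(Q^q_{\theta_{k+1}}(f)) > 0$. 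A direct differentiation gives $g'(0) = q\,(F_{\theta_{k+1}}^{-1})'(q)$, and Taylor's formula at $\delta = \Delta_k$ produces exactly the right-hand side of \eqref{eq:quantileChangeLinearized}.

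The only delicate step is the first one: the inequality $W^f_{\theta_k}(x) \leq \mathbb{1}[f(x) \leq Q^q_{\theta_k}(f)]$ must be handled carefully when $F_{\theta_k}$ has atoms or flat pieces, since no regularity is assumed on $F_{\theta_k}$ (only on $F_{\theta_{k+1}}$). This is essentially the same bookkeeping that underlies Lemma \ref{lemma:improvementJ_impliesQuantileImprovement}, so I would either invoke that argument directly or reproduce the short case analysis on whether $q^\leq_{\theta_k}(x) = q^<_{\theta_k}(x)$ to make the pointwise bound rigorous. Everything downstream is then a routine application of inversion and Taylor expansion.
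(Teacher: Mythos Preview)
Your proposal is correct and follows essentially the same route as the paper: bound $J(\theta_{k+1}\mid\theta_k)$ above by $F_{\theta_{k+1}}(Q^q_{\theta_k}(f))$ via the pointwise inequality $W^f_{\theta_k}(x)\leq \mathbb{1}[f(x)\leq Q^q_{\theta_k}(f)]$ (which the paper obtains by citing the argument behind Lemma~\ref{lemma:improvementJ_impliesQuantileImprovement}), identify $q=F_{\theta_{k+1}}(Q^q_{\theta_{k+1}}(f))$, invert, and then Taylor-expand $F_{\theta_{k+1}}^{-1}\circ\exp$ using the inverse function theorem. Your write-up is in fact slightly more explicit than the paper's on both the pointwise bound and the chain-rule computation of $g'(0)$.
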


\begin{proof}
    We first show that $F_{\theta_{k+1}}(Q_{\theta_k}^q(f)) \geq q \Delta_k$, by adapting parts of the proof of \cite[Lemma 8]{akimoto2013}. First, following the arguments laid in the proof of \cite[Lemma 8]{akimoto2013}, we have that $W_{\theta_k}^f(x) = 0$ for any $x \in \mathbb{X}$ such that $f(x) > Q_{\theta_k}^q(f)$. This implies that $J(\theta_{k+1} | \theta_k) \leq \mathbb{P}_{X \sim p_{\theta_{k+1}}}[f(X) \leq Q_{\theta_k}^q(f)] = F_{\theta_{k+1}}(Q_{\theta_k}^q(f))$.

    From our assumptions, we thus have that $\exp(\Delta_k) q \leq F_{\theta_{k+1}}(Q_{\theta_k}^q(f))$ since our choice of $w$ implies $Z_w = q$. On the other hand, our assumption on $F_{\theta_{k+1}}$ implies that $Q_{\theta_{k+1}}^q(f)$ is the only value satisfying $q = F_{\theta_{k+1}}(Q_{\theta_{k+1}}^q(f))$. These two facts imply that $F_{\theta_{k+1}}(Q_{\theta_k}^q(f)) \geq \exp(\Delta_k) F_{\theta_{k+1}}(Q_{\theta_{k+1}}^q(f))$, establishing Equation \eqref{eq:quantileChangeQuantitative} with the bijectivity of $F_{\theta_{k+1}}$.

    In order to prove Equation \eqref{eq:quantileChangeLinearized}, we use Equation \eqref{eq:quantileChangeQuantitative} and develop $F_{\theta_{k+1}}^{-1} \circ \exp$ around $\ln F_{\theta_{k+1}}(Q_{\theta_{k+1}}^q(f))$. We get the well-posedness and sign of $\left(F_{\theta_{k+1}}^{-1}\right)'$ from the inverse function theorem.
\end{proof}

Proposition \ref{prop:improvementJ_QuantileImprovementQantitative} shows a complex interplay between the proposals and the objective $f$ through the cumulative density function $F_{\theta_{k+1}}$. This Proposition may be used to assess how a certain family of proposals is adapted to the problem at hand. Indeed, one could aim for a family of proposals such that $\left(F_{\theta_{\theta_{k+1}}}^{-1} \right)'(q)$ is as high as possible to ensure the largest quantile improvement possible.

In the particular case when $w(u) = \delta_{u \leq q}(u)$, we can then extend the result of Propositions \ref{prop:KLDecreaseImpliesIntDecrease} and \ref{prop:alphaDivDecreaseImpliesIntDecrease} with Lemma \ref{lemma:improvementJ_impliesQuantileImprovement} and Proposition \ref{prop:improvementJ_QuantileImprovementQantitative} to obtain the following quantile improvement results from divergence-decrease conditions.

\begin{corollary}
    \label{corollary:quantileImprovement}
    Assume that $w(u) = \delta_{u \leq q}(u)$ for some $q \in (0,1)$ and that, at a given iteration $k \in \mathbb{N}$, $p_{\theta_{k+1}}$ is constructed such that
    \begin{equation}
        \label{eq:alphaDivDecrease}
        D_{\alpha}(p_{\theta_{k+1}}, \pi_{\theta_{k}}^f) + \Delta_k \leq D_{\alpha}(p_{\theta_k}, \pi_{\theta_k}^f)
    \end{equation}
    for some $\Delta_k \in \mathbb{R}$ and $\alpha \in (0,1]$, with $\alpha = 1$ corresponding to the inequality $KL(\pi_{\theta_k}^f, p_{\theta_{k+1}}) + \Delta_k \leq  KL(\pi_{\theta_k}^f, p_{\theta_k})$.
    \begin{itemize}
        \item[$(i)$] Suppose that $\Delta_k > 0$. If $\alpha = 1$, or if $\alpha \in (0,1)$ and $W_{\theta_k}^f$ takes values in $\{0,1\}$, then $Q_{\theta_{k+1}}^q(f) \leq Q_{\theta_k}^q(f)$.
        \item[$(ii)$] If $\Delta_k > 0$ and $\mathbb{P}_{X\sim p_{\theta}} [f(X) = u] = 0$ for any $\theta \in \Theta$, $u \in \mathbb{R}$, then $Q_{\theta_{k+1}}^q(f) < Q_{\theta_k}^q(f)$.
        \item[$(iii)$] Suppose that $F_{\theta_{k+1}}: u \longmapsto \mathbb{P}_{X \sim p_{\theta}}[f(X) \leq u]$ is continuous and strictly monotonic. If $\alpha = 1$, or if $\alpha \in (0,1)$ and $W_{\theta_k}^f$ takes values in $\{0,1\}$, then Equation \eqref{eq:quantileChangeQuantitative} holds.
        \item[$(iv)$] Suppose that $F_{\theta_{k+1}}: u \longmapsto \mathbb{P}_{X \sim p_{\theta}}[f(X) \leq u]$ is bijective and $\mathcal{C}^1$ around $Q_{\theta_{k+1}}^q(f)$ with $F_{\theta_{k+1}}'(Q_{\theta_{k+1}}^q(f)) \neq 0$. If $\alpha = 1$, or if $\alpha \in (0,1)$ and $W_{\theta_k}^f$ takes values in $\{0,1\}$, then Equation \eqref{eq:quantileChangeLinearized} holds for $\Delta_k$ small enough.
    \end{itemize}
\end{corollary}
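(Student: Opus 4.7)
The plan is to chain the divergence-based Propositions \ref{prop:KLDecreaseImpliesIntDecrease} and \ref{prop:alphaDivDecreaseImpliesIntDecrease} with the quantile-improvement results Lemma \ref{lemma:improvementJ_impliesQuantileImprovement} and Proposition \ref{prop:improvementJ_QuantileImprovementQantitative}. The hypothesis $w(u) = \delta_{u\leq q}(u)$ is exactly what activates these latter two results, so the argument reduces to checking their premises in each case, using the divergence-decrease hypothesis only to produce the intermediate quantity $J(\theta_{k+1}\mid\theta_k) \geq \exp(\Delta_k) Z_w$.

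For part $(i)$, I would apply Proposition \ref{prop:KLDecreaseImpliesIntDecrease} when $\alpha = 1$, or Proposition \ref{prop:alphaDivDecreaseImpliesIntDecrease} when $\alpha \in (0,1)$ (using the standing assumption that $W_{\theta_k}^f$ takes values in $\{0,1\}$), to deduce $J(\theta_{k+1}\mid\theta_k) \geq \exp(\Delta_k) Z_w$. Since $\Delta_k > 0$ gives $\exp(\Delta_k) > 1$, the strict increase condition \eqref{eq:strictIncreaseJ} holds, so Lemma \ref{lemma:improvementJ_impliesQuantileImprovement} immediately yields $Q_{\theta_{k+1}}^q(f) \leq Q_{\theta_k}^q(f)$.

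For part $(ii)$, the assumption $\mathbb{P}_{X\sim p_\theta}[f(X)=u]=0$ for all $\theta, u$ does double duty: it forces $q_\theta^{<} \equiv q_\theta^{\leq}$ (so that $W_{\theta_k}^f$ automatically takes values in $\{0,1\}$, handling the $\alpha \in (0,1)$ case without an extra hypothesis), and it also supplies the supplementary condition $\mathbb{P}_{X\sim p_{\theta_{k+1}}}[f(X) = Q_{\theta_k}^q(f)] = 0$ required by Lemma \ref{lemma:improvementJ_impliesQuantileImprovement} to upgrade the conclusion to the strict inequality $Q_{\theta_{k+1}}^q(f) < Q_{\theta_k}^q(f)$. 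Parts $(iii)$ and $(iv)$ are then immediate: combining the inequality $J(\theta_{k+1}\mid\theta_k) \geq \exp(\Delta_k) Z_w$ produced as in part $(i)$ with the stated continuity (respectively $\mathcal{C}^1$) assumption on $F_{\theta_{k+1}}$ matches exactly the premises of Proposition \ref{prop:improvementJ_QuantileImprovementQantitative}, delivering \eqref{eq:quantileChangeQuantitative} and \eqref{eq:quantileChangeLinearized}.

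There is no substantive obstacle; the work is really only bookkeeping the hypotheses of each cited result. The one subtlety I would flag is that Proposition \ref{prop:alphaDivDecreaseImpliesIntDecrease} is stated with arguments $D_\alpha(\pi^f_{\theta_k}, p_{\theta_{k+1}})$, whereas \eqref{eq:alphaDivDecrease} in the corollary is written with the reversed order $D_\alpha(p_{\theta_{k+1}}, \pi^f_{\theta_k})$; either the convention must be aligned, or one first rewrites the hypothesis via the skew-symmetry identity $(1-\alpha) D_\alpha(p,q) = \alpha D_{1-\alpha}(q,p)$ and then invokes Proposition \ref{prop:alphaDivDecreaseImpliesIntDecrease} with parameter $1-\alpha \in (0,1)$ and rescaled gap $\tfrac{1-\alpha}{\alpha}\Delta_k$, which preserves the sign of $\Delta_k$ and hence all conclusions above.
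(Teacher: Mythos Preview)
Your proposal is correct and follows essentially the same route as the paper: chain Propositions~\ref{prop:KLDecreaseImpliesIntDecrease}/\ref{prop:alphaDivDecreaseImpliesIntDecrease} to obtain $J(\theta_{k+1}\mid\theta_k)\geq \exp(\Delta_k)Z_w$, then invoke Lemma~\ref{lemma:improvementJ_impliesQuantileImprovement} for $(i)$--$(ii)$ and Proposition~\ref{prop:improvementJ_QuantileImprovementQantitative} for $(iii)$--$(iv)$; in $(ii)$ you make exactly the paper's observation that $\mathbb{P}[f(X)=u]=0$ forces $q_\theta^{<}=q_\theta^{\leq}$ (hence $W_{\theta_k}^f\in\{0,1\}$) and simultaneously supplies the zero-mass condition needed for strict inequality. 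You also correctly flag the argument-order discrepancy between \eqref{eq:alphaDivDecrease} and \eqref{eq:alphaDivStrictDecrease}, which the paper's own proof silently ignores (treating the two as identical); your skew-symmetry fix is valid for the qualitative parts $(i)$--$(ii)$, though note that for $(iii)$--$(iv)$ it would replace $\Delta_k$ by $\tfrac{1-\alpha}{\alpha}\Delta_k$ in the quantitative bounds, so the cleaner reading is simply that the order in \eqref{eq:alphaDivDecrease} is a typo for $D_\alpha(\pi_{\theta_k}^f,\cdot)$.
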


\begin{proof}
    Point $(i)$ follows from Propositions \ref{prop:KLDecreaseImpliesIntDecrease} and \ref{prop:alphaDivDecreaseImpliesIntDecrease} with the first part of Lemma \ref{lemma:improvementJ_impliesQuantileImprovement}. Point $(ii)$ follows by remarking that, under our assumptions, for any $\theta \in \Theta$, $x \in \mathbb{X}$, $q_{\theta}^<(x) = q_{\theta}^{\leq}(x)$, ensuring that $W_{\theta_k}^f(x)$ takes values in $\{0,1\}$. This also implies that $\mathbb{P}_{X \sim p_{\theta_{k+1}}}[f(X) = Q_{\theta_k}^q(f)] = 0$. The result comes by applying the second part of Lemma \ref{lemma:improvementJ_impliesQuantileImprovement}. Finally, points $(iii)$ and $(iv)$ are proven using the results from Propositions \ref{prop:KLDecreaseImpliesIntDecrease} and \ref{prop:alphaDivDecreaseImpliesIntDecrease} together with the results of Proposition \ref{prop:improvementJ_QuantileImprovementQantitative}.
\end{proof}

\subsubsection{Monitoring target construction}
We now show that the KL and Rényi divergences can also be used to control the discrepancy between the proposal $p_{\theta}$ and the resulting target $\pi_{\theta}^f$. The resulting bound only depends on the choice of the weighting function $w$.

\begin{proposition}
    \label{prop:targetControl}
    Consider $\theta \in \Theta$ and the probability densities $p_{\theta}$ and $\pi_{\theta}^f$. We have the following results, writing $D_1(\pi_{\theta}^f, p_{\theta})$ for $KL(\pi_{\theta}^f, p_{\theta})$.
    \begin{itemize}
        \item[$(i)$] If $W_{\theta}^f$ takes values in $\{0,1\}$, then $D_{\alpha}(\pi_{\theta}^f, p_{\theta}) = -\ln Z_w$ for any $\alpha > 0$.
        \item[$(ii)$] If $w$ takes values in $[0,1]$, then $D_{\alpha}(\pi_{\theta}^f, p_{\theta}) \leq -\ln Z_w$ for any $\alpha \in (0,1]$.
    \end{itemize}
\end{proposition}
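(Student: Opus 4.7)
The plan is to compute both divergences by first exploiting the identity $\pi_{\theta}^f(x)/p_{\theta}(x) = W_{\theta}^f(x)/Z_w$, which comes directly from the definition in \eqref{eq:targetConstruction}. This reduces everything to elementary manipulations of integrals of $W_{\theta}^f$ against $p_{\theta}$, together with the fundamental identity $\int W_{\theta}^f(x) p_{\theta}(x) m(dx) = Z_w$ (itself a consequence of $\pi_{\theta}^f$ being a probability density).

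For the Rényi divergence with $\alpha \in (0,1)\cup(1,+\infty)$, I would rewrite
\begin{equation*}
D_{\alpha}(\pi_{\theta}^f, p_{\theta}) = \frac{1}{\alpha - 1}\ln \int \left(\frac{\pi_{\theta}^f(x)}{p_{\theta}(x)}\right)^{\alpha} p_{\theta}(x) m(dx) = \frac{1}{\alpha-1}\ln\left[Z_w^{-\alpha}\int W_{\theta}^f(x)^{\alpha} p_{\theta}(x) m(dx)\right].
\end{equation*}
For the KL case, the definition gives
\begin{equation*}
KL(\pi_{\theta}^f, p_{\theta}) = \int \ln\!\left(\frac{W_{\theta}^f(x)}{Z_w}\right) \pi_{\theta}^f(x) m(dx) = \int \ln W_{\theta}^f(x)\, \pi_{\theta}^f(x) m(dx) - \ln Z_w.
\end{equation*}

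For point $(i)$, since $W_{\theta}^f$ takes values in $\{0,1\}$, we have $W_{\theta}^f(x)^{\alpha} = W_{\theta}^f(x)$ for every $\alpha > 0$, and so $\int W_{\theta}^f(x)^{\alpha} p_{\theta}(x) m(dx) = Z_w$. Substituting into the Rényi expression gives $(\alpha-1)^{-1}\ln Z_w^{1-\alpha} = -\ln Z_w$. For the KL case, the integrand $\ln W_{\theta}^f(x)\, \pi_{\theta}^f(x)$ vanishes (either $W_{\theta}^f = 1$, making $\ln W_{\theta}^f = 0$, or $W_{\theta}^f = 0$, so $\pi_{\theta}^f = 0$, and we use the convention $0\ln 0 = 0$ from the definition of the divergences), again yielding $-\ln Z_w$.

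For point $(ii)$, I would first observe that $W_{\theta}^f(x)\in[0,1]$ whenever $w$ does, since $W_{\theta}^f(x)$ is either $w$ evaluated at a point or an average of $w$ over an interval. Then for $\alpha\in(0,1)$, the inequality $t^{\alpha}\geq t$ on $[0,1]$ gives $\int W_{\theta}^f(x)^{\alpha}p_{\theta}(x) m(dx) \geq \int W_{\theta}^f(x) p_{\theta}(x) m(dx) = Z_w$; multiplying by the negative factor $(\alpha-1)^{-1}$ reverses the inequality and yields $D_{\alpha}(\pi_{\theta}^f, p_{\theta}) \leq -\ln Z_w$. For $\alpha = 1$, the bound $\ln W_{\theta}^f(x)\leq 0$ immediately implies $\int \ln W_{\theta}^f(x)\,\pi_{\theta}^f(x) m(dx) \leq 0$, and hence $KL(\pi_{\theta}^f, p_{\theta}) \leq -\ln Z_w$. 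There is no real obstacle here; the only subtlety is to verify carefully the $[0,1]$-valuedness of $W_{\theta}^f$ under the assumption on $w$, and to handle the convention $0\ln 0 = 0$ consistently in the KL computation.
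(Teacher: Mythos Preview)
Your proposal is correct and follows essentially the same route as the paper: both rewrite $D_{\alpha}(\pi_{\theta}^f,p_{\theta})$ in terms of $\int W_{\theta}^f(x)^{\alpha} p_{\theta}(x)\,m(dx)$, use $W_{\theta}^f(x)^{\alpha}=W_{\theta}^f(x)$ for $\{0,1\}$-valued $W_{\theta}^f$ in $(i)$, and use $t^{\alpha}\geq t$ on $[0,1]$ for $\alpha\in(0,1)$ in $(ii)$. The only difference is in handling $\alpha=1$: the paper obtains the KL case as the limit $\alpha\to 1^-$ of the Rényi bound (invoking continuity of $\alpha\mapsto D_{\alpha}$ from \cite{vanErven2014}), whereas you compute $KL(\pi_{\theta}^f,p_{\theta})=\int \ln W_{\theta}^f(x)\,\pi_{\theta}^f(x)\,m(dx)-\ln Z_w$ directly and bound the integral term. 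Your direct argument is slightly more self-contained, while the paper's limit argument avoids re-examining the $0\ln 0$ convention; both are valid and short.
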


\begin{proof}
    Consider any $\alpha \in (0,1) \cup (1,+\infty)$, we have
    \begin{equation}
        \label{eq:rewritingDalpha}
        D_{\alpha}(\pi_{\theta}^f, p_{\theta}) = \frac{1}{\alpha-1} \ln \left( \int \frac{W_{\theta}^f(x)^{\alpha}}{Z_w^{\alpha}} p_{\theta}(x)m(dx) \right).
    \end{equation}

    $(i)$ We have that $W_{\theta}^f(x)^{1-\alpha} = W_{\theta}^f(x)$ for any $x \in \mathbb{X}$, thus showing with Equation \eqref{eq:rewritingDalpha} that $D_{\alpha}(p_{\theta}, \pi_{\theta}^f) = - \ln Z_w$ for any $\alpha \in (0,1) \cup (1,+\infty)$ which implies the result, using \cite[Theorem 3]{vanErven2014} for the case of the KL divergence.

    $(ii)$ Since $w$ takes values in $[0,1]$, we also have $W_{\theta}^f(x) \in [0,1]$ for any $x \in \mathbb{X}$. Therefore, $W_{\theta}^f(x)^{\alpha} \geq W_{\theta}^f(x)$ for any $x \in \mathbb{X}$ when $\alpha \in (0,1)$. We thus get from Equation \eqref{eq:rewritingDalpha} when $\alpha \in (0,1)$ that $D_{\alpha}(\pi_{\theta}^f, p_{\theta}) \leq - \ln Z_w$. Taking the limit $\alpha \rightarrow 1$, $\alpha < 1$, we finally obtain that $KL(\pi_{\theta}^f, p_{\theta}) \leq - \ln Z_w$.
\end{proof}

\begin{remark}
    Consider, following Remark \ref{remark:otherWeighting}, that we use a target density of the form $\pi_{\theta}^f(x) \propto \phi(f(x)) p_{\theta}(x)$, using $\phi \circ f$ instead of $W_{\theta}^f$. It would be possible to derive results like Proposition \ref{prop:targetControl}. However, the normalization constant of $\pi_{\theta}^f$ is $\int \phi(f(x)) p_{\theta}(x) m(dx)$ which depends on $\theta$, while using $W_{\theta}^f$ ensures that the normalization constant of $\pi_{\theta}^f$ is equal to $Z_w$ for any $\theta \in \Theta$. 
\end{remark}

With Propositions \ref{prop:KLDecreaseImpliesIntDecrease} and \ref{prop:alphaDivDecreaseImpliesIntDecrease}, we have shown that if the divergence between the target and the next proposal is lower than the divergence between the target and the current proposal, Equation \eqref{eq:strictIncreaseJ} is satisfied. In the particular case of an indicator weighting function, Corollary \ref{corollary:quantileImprovement} shows that this leads to a quantile improvement. With Proposition \ref{prop:targetControl}, we have furthermore shown that the divergence between the target and the current proposal, from which the target is constructed, can be controlled by a quantity that depends only on the weighting function $w$. This means that, for any algorithm satisfying a divergence-decrease conditions at every step, divergences can also be used to understand both steps of the algorithms, namely the construction of the target, and the construction of the next proposal. We illustrate this fact in Figure \ref{fig:divergencePOV}.

\begin{figure*}[!ht]
\centering
\subfloat[]{\def\svgwidth{0.45\columnwidth}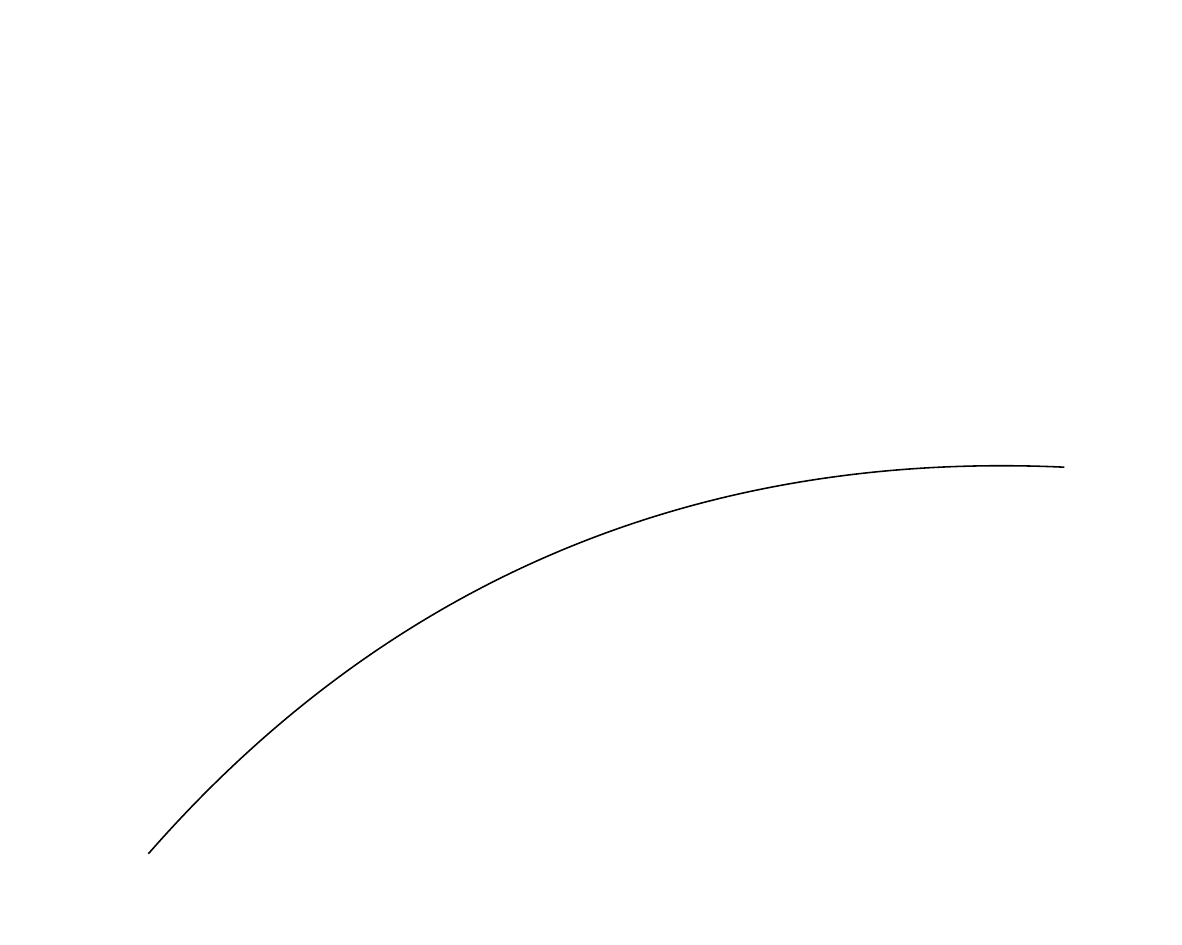}
\label{fig:divergenceAlpha}
\hfill
\subfloat[]{\def\svgwidth{0.45\columnwidth}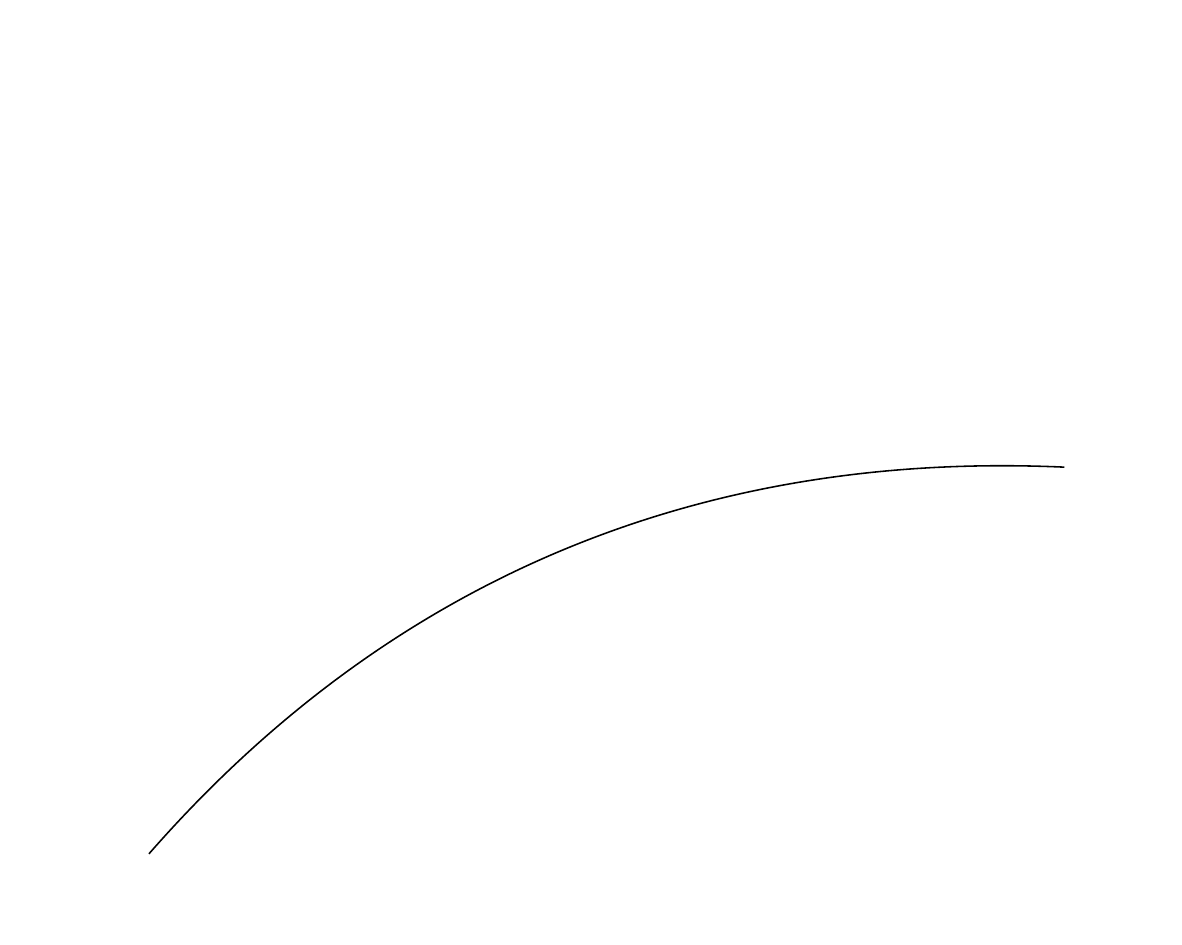}
\label{fig:divergenceKL}
\caption{A schematic view of one step of an algorithm covered by our divergence-based framework. Starting from a proposal $p_{\theta_k}$, one first constructs the target $\pi_{\theta_k}^f$ following \eqref{eq:targetConstruction}, i.e., a process that benefits from the results of Proposition \ref{prop:targetControl}, with $r = -\ln Z_w$. Then, one adapts $p_{\theta_{k+1}}$ such that a divergence-decrease \eqref{eq:alphaDivDecrease}, for some $\alpha \in (0,1]$, is achieved. (a) Case $\alpha \in (0,1]$ and $W_{\theta_k}^f$ taking values in $\{0,1\}$, corresponding to Propositions \ref{prop:KLDecreaseImpliesIntDecrease} or \ref{prop:alphaDivDecreaseImpliesIntDecrease} and Proposition \ref{prop:targetControl} $(i)$. (b) Case $\alpha \in (0,1]$ and $w$ taking values in $[0,1]$, corresponding to Propositions \ref{prop:KLDecreaseImpliesIntDecrease} or \ref{prop:alphaDivDecreaseImpliesIntDecrease} and Proposition \ref{prop:targetControl} $(ii)$.}
\label{fig:divergencePOV}
\end{figure*}

\subsubsection{Finite sample regime}
Our divergence-decrease conditions, presented in Propositions \ref{prop:KLDecreaseImpliesIntDecrease} and \ref{prop:alphaDivDecreaseImpliesIntDecrease}, are useful to analyze algorithms working in the finite-sample regime. Indeed, our results hold independently of the construction strategy adopted to define the next proposal. Actually, they even hold when the next proposal distribution degrades the performance upon the current one, which corresponds to the case $\Delta_k < 0$ in Equations \eqref{eq:KLStrictDecrease} and \eqref{eq:alphaDivStrictDecrease}. Such situation can typically arise in the finite sample regime, as we discuss hereafter.

In the case of finite sample size, the construction of the next proposal becomes inexact (i.e., noisy) as gradients need to be approximated. This makes the analysis of the algorithm more difficult. If the noise is controlled, in such a way that its effect is `absorbed' by the parameter $\Delta_k$ in the divergence-decrease conditions of Equations \eqref{eq:KLStrictDecrease} and \eqref{eq:alphaDivStrictDecrease}, then all of our results still apply (for instance in expectation or with high probability, depending on the control one has on the noise). Namely, it remains possible to establish improvement, or control the degradation if $\Delta_k < 0$, in terms of expectation-based reformulation and then in terms of quantile in such noisy contexts.

\subsection{Analyzing the IGO algorithms with our framework}
\label{sec:32}

\subsubsection{{Main result}} We now show that both IGO algorithms, namely Algorithms \ref{alg:IGO_naturalGrad} and \ref{alg:IGO-ML}, proposed in \cite{ollivier2017}, fall within our divergence-decrease framework, showing the applicability of our construction. We quantify the improvement achieved at each iteration in terms of divergence, which can then be used to quantify the quantile improvement using Proposition \ref{prop:improvementJ_QuantileImprovementQantitative}.

\begin{proposition}
    \label{prop:IGO_divDecrease}
    Consider a sequence $\{ \theta_k \}_{k \in \mathbb{N}}$ constructed either from Algorithm \ref{alg:IGO_naturalGrad} or Algorithm \ref{alg:IGO-ML}. Then, at every iteration $k \in \mathbb{N}$, we have the following.
    \begin{itemize}
        \item[$(i)$] If Algorithm \ref{alg:IGO_naturalGrad} is used, the proposal $p_{\theta_{k+1}}$ satisfies $KL(\pi_{\theta_k}^f, p_{\theta_{k+1}}) \leq KL(\pi_{\theta_k}^f, p_{\theta_k})$ for step sizes $\tau > 0$ small enough, with equality if and only if $\theta_{k+1} = \theta_k$. 
        \item[$(ii)$] If Algorithm \ref{alg:IGO_naturalGrad} is used and $\{p_{\theta},\,\theta \in \Theta\}$ is an exponential family, we have (under some assumptions detailed in the proof), that $KL(\pi_{\theta_k}^f, p_{\theta_{k+1}}) + \Delta_k \leq KL(\pi_{\theta_k}^f, p_{\theta_k})$ with $\Delta_k = \frac{1-\tau Z_w}{\tau Z_w} KL(p_{\theta_k}, p_{\theta_{k+1}})$.
        \item[$(iii)$] If Algorithm \ref{alg:IGO-ML} is used, then $KL(\pi_{\theta_k}^f,p_{\theta_{k+1}}) + \Delta_k \leq KL(\pi_{\theta_k}, p_{\theta_k})$ with $\Delta_k = \frac{1-\tau}{\tau Z_w} KL(p_{\theta_k}, p_{\theta_{k+1}})$.
    \end{itemize}
\end{proposition}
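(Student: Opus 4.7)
The plan is to begin with (iii), the most direct statement, then use the same argmin mechanism for (ii), and treat (i) via a first-order expansion. For (iii), the key step is to rewrite the objective maximised in Algorithm \ref{alg:IGO-ML}. Using the identity $W_{\theta_k}^f(x) p_{\theta_k}(x) = Z_w \pi_{\theta_k}^f(x)$ (by definition \eqref{eq:targetConstruction}) and recognising the log-integrals as (negative) cross-entropies, the objective equals
\[
-\left[(1-\tau) KL(p_{\theta_k}, p_\theta) + \tau Z_w KL(\pi_{\theta_k}^f, p_\theta)\right]
\]
up to terms independent of $\theta$. Thus $\theta_{k+1}$ minimises the bracketed sum. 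Evaluating this sum at $\theta = \theta_k$ gives $\tau Z_w KL(\pi_{\theta_k}^f, p_{\theta_k})$, since the first KL vanishes, so by optimality
\[
(1-\tau) KL(p_{\theta_k}, p_{\theta_{k+1}}) + \tau Z_w KL(\pi_{\theta_k}^f, p_{\theta_{k+1}}) \leq \tau Z_w KL(\pi_{\theta_k}^f, p_{\theta_k}).
\]
Dividing through by $\tau Z_w$ yields the announced inequality with $\Delta_k = \frac{1-\tau}{\tau Z_w} KL(p_{\theta_k}, p_{\theta_{k+1}})$.

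For (i), a short computation using $W_{\theta_k}^f p_{\theta_k} = Z_w \pi_{\theta_k}^f$ and the score-function identity gives $\nabla_\theta J(\theta | \theta_k)|_{\theta = \theta_k} = -Z_w \nabla_\theta KL(\pi_{\theta_k}^f, p_\theta)|_{\theta = \theta_k}$, so Algorithm \ref{alg:IGO_naturalGrad} is exactly a natural gradient descent on $g(\theta) = KL(\pi_{\theta_k}^f, p_\theta)$ with effective step $\tau Z_w$. A first-order Taylor expansion of $g$ around $\theta_k$ then yields
\[
g(\theta_{k+1}) = g(\theta_k) - \tau Z_w \langle \nabla g(\theta_k), I(\theta_k)^{-1} \nabla g(\theta_k)\rangle + O(\tau^2).
\]
Positive definiteness of $I(\theta_k)^{-1}$ makes the first-order correction strictly negative unless $\nabla g(\theta_k) = 0$, in which case Equation \eqref{eq:IGO_naturalGradient} forces $\theta_{k+1} = \theta_k$. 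Choosing $\tau$ small enough absorbs the $O(\tau^2)$ term and closes the argument, with equality if and only if $\theta_{k+1} = \theta_k$.

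For (ii), the plan is to leverage the exponential-family identity attributed to \cite{barndorff-nielsen2014} that, under suitable assumptions, the natural-parameter natural gradient step is equivalent to a moment interpolation: $\mu(\theta_{k+1}) = (1 - \tau Z_w) \mu(\theta_k) + \tau Z_w \mathbb{E}_{\pi_{\theta_k}^f}[T]$, where $T$ is the sufficient statistic and $\mu(\theta) = \mathbb{E}_{p_\theta}[T]$. This is precisely the first-order optimality condition of $\argmin_\theta [(1-\tau Z_w) KL(p_{\theta_k}, p_\theta) + \tau Z_w KL(\pi_{\theta_k}^f, p_\theta)]$, so applying the argmin argument from (iii) with weights $1-\tau Z_w$ and $\tau Z_w$ in place of $1-\tau$ and $\tau$ delivers the stated inequality. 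The main obstacle is establishing this equivalence rigorously: in general, $\mu(\theta_{k+1}) - \mu(\theta_k) = I(\theta_k)(\theta_{k+1} - \theta_k)$ only holds to first order in $\tau$, so the exactness of the moment interpolation requires an additional hypothesis, typically that the natural gradient is performed in the mean parameters, or that the convex potential $A(\theta)$ is quadratic along the update direction. This is the content of the authors' caveat ``(under some assumptions detailed in the proof)''.
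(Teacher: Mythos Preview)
Your proposal is correct and follows the paper's proof in all three parts: the argmin-of-weighted-KLs rewriting for (iii), the identification of natural gradient ascent on $J$ with natural gradient descent on $\theta \mapsto KL(\pi_{\theta_k}^f, p_\theta)$ for (i), and the moment-interpolation-then-argmin argument for (ii). Your caveat in (ii) is resolved exactly as you anticipated---the paper takes the exact mean-parameter update $\eta_{k+1} = (1-\tau Z_w)\eta_k + \tau Z_w\,\mathbb{E}_{\pi_{\theta_k}^f}[\Gamma]$ (with $\eta = \nabla A(\theta)$) as the form of the IGO step in an exponential family, citing \cite[Equation~(15)]{akimoto2013}, and then recognises it as the first-order optimality condition of the convex problem $\min_\theta \big[KL(\pi_{\theta_k}^f, p_\theta) + \tfrac{1-\tau Z_w}{\tau Z_w} KL(p_{\theta_k}, p_\theta)\big]$.
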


\begin{proof}
    Let $k \in \mathbb{N}$.
    
    $(i)$ The update \eqref{eq:IGO_naturalGradient} in Algorithm \ref{alg:IGO_naturalGrad} can be written as $\theta_{k+1} = \theta_k + \tau \int W_{\theta_k}^f(x) \widetilde{\nabla}_{\theta} \left( p_{\theta}(x) \right)_{|\theta = \theta_k} m(dx)$. Then, using $\widetilde{\nabla}_{\theta} \left( \ln p_{\theta}(x) \right)_{|\theta = \theta_k} = (1 / p_{\theta_k}(x)) \widetilde{\nabla}_{\theta} \left( p_{\theta}(x) \right)_{|\theta = \theta_k}$, we obtain that \eqref{eq:IGO_naturalGradient} is equivalent to having
    \begin{equation*}
        \theta_{k+1} = \theta_k + \tau \int W_{\theta_k}^f(x) p_{\theta_k}(x) \widetilde{\nabla}_{\theta} \left( \ln p_{\theta}(x) \right)_{|\theta = \theta_k} m(dx).
    \end{equation*}
    We can then notice that this is equivalent to performing $\theta_{k+1} = \theta_k - \tau Z_w \widetilde{\nabla}_{\theta} \left( KL(\pi_{\theta_k}^f, p_{\theta}) \right)_{|\theta = \theta_k}$, from which we deduce the result.

    $(ii)$ Suppose that $\{ p_{\theta}, \theta \in \Theta \}$ forms a minimal exponential family \cite{barndorff-nielsen2014} with sufficient statistics $\Gamma$ and log-partition function $A$ with $\Theta = \domain A$. Then, $A$ is differentiable on $\interior \domain A$ with $\nabla A(\theta) = \mathbb{E}_{X \sim p_{\theta}}[\Gamma(X)]$ \cite[Theorem 8.1]{barndorff-nielsen2014}. We also suppose that $(\theta_{k+1}, \theta_k) \in (\interior \Theta)^2$ and that for any $\theta \in \domain A$, $KL(\pi_{\theta_k}^f, p_{\theta}) < +\infty$. 

    From \cite[Equation (15)]{akimoto2013}, the IGO update over an exponential family at iteration $k$ reads
    \begin{equation*}
        \eta_{k+1} = \eta_k + \tau \int \left( W_{\theta_k}^f(x) (\Gamma(x) - \eta_k) \right)p_{\theta_k}(x)dx.
    \end{equation*}
    where $\eta_k = \nabla A(\theta_k)$ and $\eta_{k+1} = \nabla A(\theta_{k+1})$, both well-defined under our assumptions. This is equivalent to having 
    \begin{equation}
        \label{eq:optCond}
         \mathbb{E}_{X \sim \pi_{\theta_k}^f}[\Gamma(X)] - \nabla A(\theta_{k+1}) + \frac{1 - \tau Z_w}{\tau Z_w} (\nabla A(\theta_k)- \nabla A(\theta_{k+1})) = 0.
    \end{equation}
    We now interpret Equation \eqref{eq:optCond} as an optimality condition, showing that
    \begin{equation*}
        \theta_{k+1} = \argmin_{\theta \in \Theta} \left( KL(\pi_{\theta_k}^f, p_{\theta}) + \frac{1-\tau Z_w}{\tau Z_w} KL(p_{\theta_k}, p_{\theta}) \right).
    \end{equation*}
    This implies that $\theta_{k+1}$ is such that
    \begin{equation*}
        KL(\pi_{\theta_k}^f, p_{\theta_{k+1}}) + \frac{1-\tau Z_w}{\tau Z_w} KL(p_{\theta_k}, p_{\theta_{k+1}}) \leq KL(\pi_{\theta_k}^f, p_{\theta_k}).
    \end{equation*}

    We now show that Equation \eqref{eq:optCond} is the optimality condition of the problem solved by $\theta_{k+1}$. Consider any $\theta \in \Theta$ and a  probability density with respect to $m$, denoted by $p$. We have
    \begin{align*}
        KL(p, p_{\theta}) = \mathbb{E}_{X \sim p}[\ln p(X)] - \langle \mathbb{E}_{X \sim p}[\Gamma(X)], \theta\rangle + A(\theta),
    \end{align*}
    and thus obtain that $\nabla_{\theta} KL(p, p_{\theta}) = \nabla A(\theta) - \mathbb{E}_{X \sim p}[\Gamma(X)]$, which we then use to show the desired result.

    $(iii)$ The IGO-ML update \eqref{eq:IGO-ML} can be rewritten as
    \begin{align*}
        \theta_{k+1} &= \argmin_{\theta \in \Theta} \left( (1-\tau) \int \ln \left(\frac{1}{p_{\theta}(x)} \right) p_{\theta_k}(x)m(dx) + \tau \int \ln \left(\frac{1}{p_{\theta}(x)} \right) W_{\theta_k}^f(x) p_{\theta_k}(x)m(dx) \right)\\
        &= \argmin_{\theta \in \Theta} \left( (1-\tau) KL(p_{\theta_k}, p_{\theta}) + \tau Z_w KL(\pi_{\theta_k}, p_{\theta}) \right)\\
        &= \argmin_{\theta \in \Theta} \left( KL(\pi_{\theta_k}^f, p_{\theta}) + \frac{1-\tau}{\tau Z_w} KL(p_{\theta_k}, p_{\theta}) \right).
    \end{align*}
    We thus obtain by definition of $\theta_{k+1}$ that 
    \begin{equation*}
        KL(\pi_{\theta_k}^f, p_{\theta_{k+1}}) + \frac{1-\tau}{\tau Z_w} KL(p_{\theta_k},p_{\theta_{k+1}}) \leq KL(\pi_{\theta_k}^f, p_{\theta_k}).
    \end{equation*}
\end{proof}

\subsubsection{{Discussion}} Proposition \ref{prop:IGO_divDecrease} shows that the IGO algorithms can be studied using our divergence-decrease condition with the KL divergence. For Algorithm \ref{alg:IGO_naturalGrad}, i.e., the IGO algorithm using natural gradients, we recover in Proposition \ref{prop:IGO_divDecrease} $(i)$ that the increase \eqref{eq:strictIncreaseJ}, that is $J(\theta_{k+1} | \theta_k) > Z_w$, is guaranteed for infinitesimal step sizes \cite[Proposition 7]{ollivier2017}. Similarly, we recover in Proposition \ref{prop:IGO_divDecrease} $(iii)$ a similar result for Algorithm \ref{alg:IGO-ML}, i.e., the maximum-likelihood IGO algorithm, for any step size in $(0,1]$, which was established in \cite[Theorem 6]{akimoto2013}. 

Proposition \ref{prop:IGO_divDecrease} $(ii)$ establishes a similar result for Algorithm~\ref{alg:IGO_naturalGrad} when the proposals form an exponential family. In the proof of \cite[Corollary 7]{akimoto2013}, this result was achieved by remarking that in this case, Algorithms \ref{alg:IGO_naturalGrad} and \ref{alg:IGO-ML} coincide. We do a direct proof, showing that Algorithm \ref{alg:IGO_naturalGrad} is equivalent to a proximal update with a KL divergence objective. Note that \cite[Corollary 7]{akimoto2013} ensured improvement for step sizes in $(0,1]$ while our results allow for possibly larger step sizes (if $w(u) = \delta_{u \leq q}(u)$ with $q \in (0,1)$, $1/Z_w = 1/q > 1$). This is because the authors of \cite{akimoto2013} defined $W_{\theta}^f$ such that $Z_w = 1$, while we chose here a different convention.

\begin{remark}
    Proposition \ref{prop:IGO_divDecrease} $(ii)$ actually holds in the setting of Remark \ref{remark:otherWeighting}. More explicitly, we get the same result, with same $\Delta_k$, when optimizing $\theta \longmapsto \mathbb{E}_{X \sim p_{\theta}}[\phi(f(X))]$ over an exponential family using natural gradient descent with $\pi_{\theta}^f$ defined as in Remark \ref{remark:otherWeighting}. This allows to control the improvement of $\theta \longmapsto\mathbb{E}_{X \sim p_{\theta}}[\phi(f(X))]$ over iterations thanks to the result outlined in Remark \ref{remark:otherWeighting}.
\end{remark}

\subsection{A new result on mixture-based methods with our framework}

\label{sec:33}

We now show how our proposed framework can be applied for the study of black-box global optimization algorithms with mixture proposals. As already explained, such situation is challenging to analyze by sticking to the IGO framework, as mixture proposals do not form an exponential family nor yield a closed-form solution for the IGO-ML update \eqref{eq:IGO-ML} in Algorithm \ref{alg:IGO-ML}. We focus on a particular algorithm, linked both with the M-PMC algorithm of \cite{cappe2008}, a type of expectation-maximization (EM) algorithm proposed in the context of computational statistics, and with the mixture-based CE method proposed in \cite[Example 3.2]{kroese2006}. Similarly to IGO, our proposed algorithm can be applied to discrete and continuous optimization problems. By exploiting the paradigm we introduced in Section \ref{sec:31}, we show that each iteration of the considered algorithm achieves a divergence-decrease, thus implying that the increase condition \eqref{eq:strictIncreaseJ} is fulfilled. We can then apply Corollary \ref{corollary:quantileImprovement} to establish quantile improvement.

\subsubsection{Proposed algorithm} Let us first introduce the algorithm we are going to consider, summarized in Algorithm \ref{alg:mixtCE}. In this algorithm, the weight as well as the parameters of each component of the mixture are adapted at every iteration. We consider in the following mixture models with $J \in \mathbb{N}$ components $p_{\theta} = \sum_{j=1} \lambda^{(j)} p_{\vartheta^{(j)}}$ such that, for every $j \in \mathbb{N}$, $\lambda^{(j)} \in [0,1]$ and $p_{\vartheta^{(j)}} \in \{p_{\vartheta}, \, \vartheta \in \Theta\}$, with $\sum_{j=1}^J \lambda^{(j)} = 1$. We thus have the global parameters $\theta = (\{\lambda^{(j)}\}_{j=1}^J, \{ \vartheta^{(j)} \}_{j=1}^J)$.

\begin{algorithm}[h!]
\caption{Mixture-based ML algorithm}\label{alg:mixtCE}
\begin{algorithmic}
\STATE Initialize the parameters $\vartheta_0^{(j)}$ and the mixture weights $\lambda_0^{(j)}$ for $j=1,\dots, J$, and form the global parameter $\theta_0 = ( \{\lambda_0^{(j)}\}_{j=1}^J, \{\vartheta_0^{(j)}\}_{j=1}^J )$.
\FOR{$k=0,\dots$}
    \STATE For each $j = 1,\dots,J$, define the function $\rho_k^{(j)}:\mathbb{X} \rightarrow \mathbb{R}$ defined for any $x \in \mathbb{X}$ by
     \begin{equation}
         \rho_k^{(j)}(x) = \frac{\lambda_k^{(j)} p_{\vartheta_k^{(j)}}(x)}{\sum_{i=1}^J \lambda_k^{(i)} p_{\vartheta_k^{(i)}}(x)}.
     \end{equation}
    \STATE Update $\theta_{k+1} = ( \{\lambda_{k+1}^{(j)}\}_{j=1}^J, \{\vartheta_{k+1}^{(j)}\}_{j=1}^J )$ such that for every $j = 1,\dots,J$,
     \begin{align}
         \lambda_{k+1}^{(j)} &= \mathbb{E}_{X \sim \pi_{\theta_k}^f} [ \rho_k^{(j)}(X)],\label{eq:lambdaUpdateMixt}\\
         \vartheta_{k+1}^{(j)} &= \argmax_{\vartheta \in \Theta} \mathbb{E}_{X \sim \pi_{\theta_k}^f} [\ln p_{\vartheta}(X) \rho_k^{(j)}(X)].\label{eq:thetaUpdateMixt}
     \end{align}
\ENDFOR
\end{algorithmic}
\end{algorithm}

Algorithm \ref{alg:mixtCE} shares links with the EM point of view adopted in \cite{brookes2022}. In this work, several estimation-of-distribution algorithms \cite{larranaga2002} were shown to be EM algorithms with maximum likelihood steps that are reweighted using the objective to be minimized $f$ (see also the fitness EM algorithm of \cite{wierstra2008} and the discussion in \cite[Section 5.3]{akimoto2012}). Algorithm \ref{alg:mixtCE} recovers the M-PMC algorithm of \cite{cappe2008}, which is also an EM-like algorithm, but with rank-based weights (see \cite[Equation (14)]{ollivier2017} or Equation \eqref{eq:rankBasedWeights}) instead of importance weights. Note that, contrary to \cite{brookes2022} which does not explicitly consider mixture models, we do so here. Let us also remark that  Algorithm \ref{alg:IGO-ML} can also be linked to EM, using a similar analysis. 

\subsubsection{Main result}
We now show in Proposition \ref{prop:mixtKLdecrease} that Algorithm \ref{alg:mixtCE} achieves a decrease in terms of KL divergence at every iteration. Our proof techniques are reminiscent from the recent work \cite{daudel2023} on variational inference. The result of Proposition \ref{prop:mixtKLdecrease} can then be used to apply Proposition \ref{prop:KLDecreaseImpliesIntDecrease} and Corollary \ref{corollary:quantileImprovement} and get insights on the optimization performance of Algorithm \ref{alg:mixtCE}.

\begin{proposition}
    \label{prop:mixtKLdecrease}
    Consider a sequence $\{\theta_k\}_{k \in \mathbb{N}}$ generated by Algorithm \ref{alg:mixtCE} with $\theta_k = ( \{\lambda_k^{(j)}\}_{j=1}^J, \{\vartheta_k^{(j)}\}_{j=1}^J )$ for every $k\in \mathbb{N}$. Suppose that the problem in \eqref{eq:thetaUpdateMixt} is uniquely maximized at every iteration. Then, at every iteration $k \in \mathbb{N}$, Algorithm \ref{alg:mixtCE} achieves the decrease
    \begin{equation}
        \label{eq:KLdecreaseMixt}
        KL \left(\pi_{\theta_k}^f, p_{\theta_{k+1}} \right) + \Delta_k \leq KL \left(\pi_{\theta_k}^f, p_{\theta_k} \right),
    \end{equation}
    with $\Delta_k > 0$, unless $\lambda_{k+1}^{(j)} = \lambda_k^{(j)}$ and $\vartheta_{k+1}^{(j)} = \vartheta_k^{(j)}$ for every $j =1,\dots,J$, in which  case $\Delta_k = 0$.
\end{proposition}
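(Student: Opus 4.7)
The plan is to adapt the classical EM majorization-maximization argument to our weighted target setting, in the spirit of the recent variational inference analysis of \cite{daudel2023}. The starting observation is that $KL(\pi_{\theta_k}^f, p_\theta)$ equals $-\int \ln p_\theta(x)\,\pi_{\theta_k}^f(x)\,m(dx)$ plus a constant independent of $\theta$, so establishing \eqref{eq:KLdecreaseMixt} amounts to proving
\begin{equation*}
\int \ln p_{\theta_{k+1}}(x)\,\pi_{\theta_k}^f(x)\,m(dx) \geq \int \ln p_{\theta_k}(x)\,\pi_{\theta_k}^f(x)\,m(dx) + \Delta_k.
\end{equation*}

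To produce this inequality I would build the usual EM majorant. For any measurable weight functions $\rho^{(1)},\ldots,\rho^{(J)}$ on $\mathbb{X}$ satisfying $\rho^{(j)}(x) \geq 0$ and $\sum_{j=1}^J \rho^{(j)}(x) = 1$, concavity of $\ln$ gives
\begin{equation*}
\ln p_\theta(x) \geq \sum_{j=1}^J \rho^{(j)}(x) \ln \frac{\lambda^{(j)} p_{\vartheta^{(j)}}(x)}{\rho^{(j)}(x)},
\end{equation*}
with equality precisely when $\rho^{(j)}(x) = \lambda^{(j)} p_{\vartheta^{(j)}}(x)/p_\theta(x)$. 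Integrating against $\pi_{\theta_k}^f(x)\,m(dx)$ defines a functional $Q(\theta,\rho)$ that lower-bounds $\theta \mapsto \int \ln p_\theta(x)\,\pi_{\theta_k}^f(x)\,m(dx)$ for every admissible $\rho$, and is tight at $(\theta_k,\rho_k)$ by the very construction of the algorithmic weights $\rho_k^{(j)}$ in Algorithm \ref{alg:mixtCE}.

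Next I would exploit that $Q(\cdot,\rho_k)$ is separable across the $J$ components and, within each, between $\lambda^{(j)}$ and $\vartheta^{(j)}$. Maximizing in $\lambda$ under $\sum_j \lambda^{(j)} = 1$ reduces to maximizing $\sum_{j=1}^J \bar\rho_k^{(j)} \ln \lambda^{(j)}$, where $\bar\rho_k^{(j)} = \mathbb{E}_{X \sim \pi_{\theta_k}^f}[\rho_k^{(j)}(X)]$; the Gibbs inequality identifies the unique maximizer $\lambda^{(j)} = \bar\rho_k^{(j)}$, which is exactly update \eqref{eq:lambdaUpdateMixt}. Maximizing each $\vartheta^{(j)}$-block is exactly \eqref{eq:thetaUpdateMixt}, uniquely by assumption. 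Hence $\theta_{k+1}$ uniquely maximizes $Q(\cdot,\rho_k)$, so $Q(\theta_{k+1},\rho_k) \geq Q(\theta_k,\rho_k)$ with strict inequality unless $\theta_{k+1} = \theta_k$. Chaining tightness at $(\theta_k,\rho_k)$ with the majorant property at $\theta_{k+1}$ yields \eqref{eq:KLdecreaseMixt} for $\Delta_k = Q(\theta_{k+1},\rho_k) - Q(\theta_k,\rho_k) \geq 0$.

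I expect the main obstacle to be twofold. First, producing an expression for $\Delta_k$ whose strict positivity is transparent — rather than merely inherited from the uniqueness of the $Q$-maximizer — may require unpacking Jensen and Gibbs so as to rewrite $\Delta_k$ as a sum of KL-type terms (for instance involving $\bar\rho_k$ versus $\bar\rho_{k+1}$, and the pointwise weights $\rho_k$ versus the corresponding weights attached to $\theta_{k+1}$). Second, I must handle the measure-theoretic corner cases — components with $\lambda_k^{(j)} = 0$, points where some $p_{\vartheta_k^{(j)}}(x) = 0$ leaving $\rho_k^{(j)}$ formally undefined, and the convention $0\ln 0 = 0$ — in order to keep every integral defining $Q$ well-posed. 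These can be accommodated by restricting integration to the support of $p_{\theta_k}$ (which contains that of $\pi_{\theta_k}^f$) and by treating vanishing components as degenerate.
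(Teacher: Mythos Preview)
Your proposal is correct and is essentially the same argument as the paper's proof: both apply Jensen's inequality through the responsibility weights $\rho_k^{(j)}$ to produce the EM-type minorant, then exploit separability so that \eqref{eq:lambdaUpdateMixt}--\eqref{eq:thetaUpdateMixt} uniquely maximize it, yielding $\Delta_k = Q(\theta_{k+1},\rho_k) - Q(\theta_k,\rho_k)$, which the paper further decomposes as $\sum_j \lambda_{k+1}^{(j)}\ln(\lambda_{k+1}^{(j)}/\lambda_k^{(j)})$ plus the $\vartheta$-gains. The only difference is presentational: you phrase it via the $Q$-functional and tightness, whereas the paper computes the KL gap $KL(\pi_{\theta_k}^f,p_{\theta_{k+1}}) - KL(\pi_{\theta_k}^f,p_{\theta_k})$ directly before applying Jensen; the resulting $\Delta_k$ and the strictness analysis coincide.
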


\begin{proof}
We adapt the ideas of the proof of \cite[Theorem 2]{daudel2023}. We compute the quantity
\begin{align*}
    KL(\pi_{\theta_k}^f, p_{\theta_{k+1}}) - KL(\pi_{\theta_k}^f, p_{\theta_k})
    &= \int -\ln \left( \frac{\sum_{j=1}^J \lambda_{k+1}^{(j)} p_{\vartheta_{k+1}^{(j)}}(x)}{\sum_{i=1}^J \lambda_{k}^{(i)} p_{\vartheta_{k}^{(i)}}(x)} \right) \pi_{\theta_k}^f(x)m(dx)\\
    &= \int -\ln \left( \sum_{j=1}^J \rho_k^{(j)}(x) \frac{\lambda_{k+1}^{(j)} p_{\vartheta_{k+1}^{(j)}}(x)}{\lambda_k^{(j)} p_{\vartheta_{k}^{(j)}}(x)} \right) \pi_{\theta_k}^f(x) m(dx)\\
    &\leq \int  -\sum_{j=1}^J\rho_k^{(j)}(x) \ln \left(\frac{\lambda_{k+1}^{(j)} p_{\vartheta_{k+1}^{(j)}}(x)}{\lambda_k^{(j)} p_{\vartheta_{k}^{(j)}}(x)} \right) \pi_{\theta_k}^f(x) m(dx)
\end{align*}
using Jensen's inequality and that the $\rho_k^{(j)}(x)$ sum to one for any $x \in \mathbb{X}$. We can then decompose the above quantity into two terms, namely,
\begin{multline}
    \label{eq:KLdiffDecomp}
    \int  -\sum_{j=1}^J \rho_k^{(j)}(x) \ln \left(\frac{\lambda_{k+1}^{(j)} p_{\vartheta_{k+1}^{(j)}}(x)}{\lambda_k^{(j)} p_{\vartheta_{k}^{(j)}}(x)} \right) \pi_{\theta_k}^f(x) m(dx) = - \sum_{j=1}^J \ln \left( \frac{\lambda_{k+1}^{(j)}}{\lambda_k^{(j)}} \right) \int \rho_k^{(j)}(x)\pi_{\theta_k}^f(x)m(dx) \\
    + \sum_{j=1}^J \int \rho_k^{(j)}(x) \ln \left(\frac{p_{\vartheta_k^{(j)}}(x)}{p_{\vartheta_{k+1}^{(j)}}(x)}\right) \pi_{\theta_k}^f(x)m(dx).
\end{multline}
Due to the definition of $\lambda_{k+1}$, given in Equation \eqref{eq:lambdaUpdateMixt}, that is $\lambda_{k+1}^{(j)} = \int \rho_k^{(j)}(x) \pi_{\theta_{k}}^f(x)m(dx)$, the first term in the right-hand side of Equation \eqref{eq:KLdiffDecomp} is equal to $-\sum_{j=1}^J \ln \left( \frac{\lambda_{k+1}^{(j)}}{\lambda_k^{(j)}} \right) \lambda_{k+1}^{(j)}$ which is non-positive from Jensen's inequality, being null if and only if $\lambda_k = \lambda_{k+1}$. The second term in the right-hand side of \eqref{eq:KLdiffDecomp} is a sum of $J$ terms, each being non-positive from the definition of $\vartheta_{k+1}^{(j)}$ given in Equation \eqref{eq:thetaUpdateMixt}. Each term is zero if and only if $\vartheta_{k+1}^{(j)} = \vartheta_{k}^{(j)}$, due to our assumption that each maximization problem of the form \eqref{eq:thetaUpdateMixt} is uniquely maximized. We have thus shown the decrease \eqref{eq:KLdecreaseMixt}, with equality holding if and only if $\lambda_{k+1} = \lambda_k$ and $\theta_{k+1} = \theta_k$.
\end{proof}

\subsubsection{Discussion} Since our Proposition \ref{prop:mixtKLdecrease} can be used to apply Corollary~\ref{corollary:quantileImprovement}, it can be viewed, to our knowledge, as the first result to establish quantile improvement for black-box global optimization algorithms that are explicitly mixture-based. Indeed, mixtures were not explicitly considered in \cite{ollivier2017, akimoto2013}, and they often do not admit closed-form solutions for the maximization problem \eqref{eq:IGO-ML} in Algorithm \ref{alg:IGO-ML}. The strategy adopted in the literature was usually to perform EM-like updates, as it was done in \cite[Example 3.2]{kroese2006} for instance, which can now be handled with our divergence-decrease condition. Many variational inference or adaptive importance sampling methods explicitly consider mixtures, see for instance \cite{cappe2008,bugallo2017adaptive, daudel2023}, showing the potential for further links between black-box global optimization with mixture models and variational inference. Let us also remark that compared to more complex mixture-based algorithms, such as the ones proposed in \cite{maree2017, ahrari2017, ahrari2022}, whose convergence has only been verified empirically, the proposed Algorithm \ref{alg:mixtCE} has a fixed number of mixture components.

We now discuss the links between Algorithm \ref{alg:mixtCE} and the CE algorithm of \cite[Example 3.2]{kroese2006}. To make these links more explicit, we discuss the finite sample size implementation of Algorithm \ref{alg:mixtCE}. This requires the computation of integrals with respect to $\pi_{\theta_k}^f$, as discussed in \cite{ollivier2017}. To do so, $N$ points $x_{k,n}$, $n=1,\dots,N$, are first sampled from the mixture distribution $p_{\theta_k} = \sum_{j=1}^J \lambda_k^{(j)} q_{\vartheta_k^{(j)}}$. This is done by drawing a component $j$ with probability $\lambda_k^{(j)}$ via multinomial sampling, and then drawing from $p_{\vartheta_k^{(j)}}$. Each sample $x_{k,n}$ receives a rank-based weight $\widehat{\omega}_{k,n}$ defined as in \cite[Equation (14)]{ollivier2017} by
\begin{equation}
    \label{eq:rankBasedWeights}
    \widehat{\omega}_{k,n} = \frac{1}{N} w \left( \frac{\rk(x_{k,n}) + 1/2}{N} \right),
\end{equation}
where $\rk (x_{k,n})$ is the number of samples in $\{x_{k,n}\}_{n=1}^N$ with value of $f$ strictly less than $f(x_{k,n})$. Then, one can show using \cite[Proposition 27]{ollivier2017} and Slutsky's Lemma that for any integrand $h$ such that $\mathbb{E}_{X \sim p_{\theta_k}}[h(X)^2] < +\infty$, and conditioned on $\theta_k$, that
\begin{equation}
    \frac{1}{\sum_{n=1}^N \widehat{\omega}_{k,n}}\sum_{n=1}^N \widehat{\omega}_{k,n} h(x_{k,n}) \xrightarrow[N \rightarrow +\infty]{a.s.} \mathbb{E}_{X \sim \pi_{\theta_k}^f}[h(X)].
\end{equation}

In light of the above formula, Algorithm \ref{alg:mixtCE} can be approximated at iteration $k \in \mathbb{N}$ by setting, for every $j = 1,\dots,J$,
\begin{align*}
    \lambda_{k+1}^{(j)} &= \frac{1}{\sum_{n=1}^N \widehat{\omega}_{k,n}}\sum_{n=1}^N \widehat{\omega}_{k,n} \rho_k^{(j)}(x_{k,n}),\\
    \vartheta_{k+1}^{(j)} &= \argmax_{\vartheta \in \Theta} \frac{1}{\sum_{n=1}^N \widehat{\omega}_{k,n}}\sum_{n=1}^N \widehat{\omega}_{k,n} \ln p_{\vartheta}(x_{k,n}) \rho_k^{(j)}(x_{k,n}).
\end{align*}

In order to compare with the mixture-based CE algorithm of \cite[Example 3.2]{kroese2006}, let us introduce $\xi_{k,n}^{(j)}$ which is a latent variable being equal to $1$ if $x_{k,n}$ has been sampled from the component $j$ of the mixture and zero otherwise. Then, the CE algorithm of \cite[Example 3.2]{kroese2006} has the following update at iteration $k \in \mathbb{N}$ and for every $j = 1,\dots,J$.
\begin{align*}
    \lambda_{k+1}^{(j)} &= \frac{1}{\sum_{n=1}^N \widehat{\omega}_{k,n}}\sum_{n=1}^N \widehat{\omega}_{k,n} \xi_{k,n}^{(j)},\\
    \vartheta_{k+1}^{(j)} &= \argmax_{\vartheta \in \Theta} \frac{1}{\sum_{n=1}^N \widehat{\omega}_{k,n}}\sum_{n=1}^N \widehat{\omega}_{k,n} \ln p_{\vartheta}(x_{k,n}) \xi_{k,n}^{(j)}.
\end{align*}

We thus observe that the approximated version of Algorithm \ref{alg:mixtCE} and the mixture-based CE algorithm are very similar, except that $\xi_{k,n}^{(j)}$ is used instead of $\rho_k^{(j)}(x_{k,n})$ in the latter. Since $\rho_{k}^{(j)}(x_{k,n}) = \mathbb{E}[ \xi_{k,n}^{(j)} | x_{k,n}]$, using $\rho_{k}^{(j)}(x_{k,n})$ instead of $\xi_{k,n}$ amounts to a Rao-Blackwellized version (i.e., a random variable is replaced by its conditional expectation \cite{cappe2008}) of the CE algorithm from \cite[Example 3.2]{kroese2006}. The procedure used in the approximated version of Algorithm \ref{alg:mixtCE} does not entail additional evaluations of the objective $f$, while providing better numerical stability \cite{cappe2008}, as all the components of the mixtures appear in every update. This shows how our divergence-based conditions can be used to better understand and design algorithms for mixture-based proposals. It also allows to turn divergence-minimizing methods such as the M-PMC algorithm of \cite{cappe2008} into black-box global optimization algorithms, simply by changing the target distribution and using rank-based weights.

\subsection{A new result for heavy-tailed proposals with our framework}
\label{sec:34}

We finally apply our theoretical tools to study a black-box global optimization algorithm with proposals being Student distributions with a fixed degree of freedom parameter $\nu > 0$. Specifically, we propose an algorithm to update the location and scale parameters of the proposals at every iteration, and show that it satisfies our divergence-decrease conditions. Note that, contrary to our previous analysis which also held for discrete problems, in this section, we now assume that $\mathbb{X} = \mathbb{R}^d$ and take $m$ to be the Lebesgue measure.

\subsubsection{Proposed algorithm} We consider Student distributions in dimension $d$ with $\nu > 0$ degrees of freedom indexed by their location parameters $\mu \in \mathbb{R}^d$ and scale parameters $\Sigma \in \mathcal{S}_{++}^d$, the set of positive definite matrices in dimensions $d$. The density with respect to the Lebesgue measure of the Student distribution $\mathcal{T}(\cdot ; \mu, \Sigma, \nu)$ is defined for all $x \in \mathbb{R}^d$ by
\begin{equation}
    \label{eq:studentDef}
    \mathcal{T}(x ; \mu, \Sigma, \nu) \propto \left(1 + \frac{1}{\nu}(x-\mu)^{\top}\Sigma^{-1}(x-\mu) \right)^{- \frac{\nu+d}{2}}
\end{equation}
with normalization constant being equal to $\frac{\Gamma(\nu/2)}{\Gamma((\nu+d)/2)} (\nu^d \pi^d \det(\Sigma))^{1/2}$, $\Gamma$ denoting the Gamma function and $\det$ the determinant. When $\nu = 1$, the Cauchy distributions are recovered, while Gaussian distributions are recovered in the limit $\nu \rightarrow +\infty$. Alternatively, the density in \eqref{eq:studentDef} can be written as the continuous mixture
\begin{equation}
    \label{eq:studentDefGamma}
    \mathcal{T}(x ; \mu, \Sigma, \nu) = \int_0^{+\infty} \mathcal{N}\left(x ;\mu, \frac{1}{z}\Sigma\right) \mathcal{G}\left(z ; \frac{\nu}{2}, \frac{\nu}{2}\right) dz,
\end{equation}
where the latent variable $Z$ is distributed following the Gamma distribution with parameters $(\frac{\nu}{2}, \frac{\nu}{2})$ and probability density $\mathcal{G}(z ; \frac{\nu}{2}, \frac{\nu}{2})$ for any $z \in (0,+\infty)$. Conditionally on $Z$, $X$ follows a Gaussian distribution with mean $\mu$ and covariance $\frac{1}{Z}\Sigma$, and density $\mathcal{N}(x ; \mu, \frac{1}{Z}\Sigma)$ for any $x \in \mathbb{R}^d$. We will use the point of view from \eqref{eq:studentDefGamma} in the following. We fix $\nu > 0$, and consider parameters $ \theta = (\mu, \Sigma)$ with associated densities $p_{\theta} = \mathcal{T}(\cdot ; \mu, \Sigma, \nu)$. In this context, we propose the heavy-tailed black-box global optimization algorithm, summarized in Algorithm~\ref{alg:heavyCE}.

\begin{algorithm}[H]
\caption{Heavy-tail ML algorithm}\label{alg:heavyCE}
\begin{algorithmic}
\STATE Initialize the parameters $\theta_0 = ( \mu_0, \Sigma_0)$ and choose the degree of freedom parameter $\nu > 0$.
\FOR{$k=0,\dots$}
    \STATE Define the function $\gamma_k^{(\nu)}:\mathbb{X} \rightarrow \mathbb{R}$ defined for any $x \in \mathbb{X}$ by
     \begin{equation}
        \label{eq:defGammaNu}
         \gamma_k^{(\nu)}(x) = \frac{\nu + d}{\nu + (x-\mu_k)^{\top}\Sigma_k^{-1}(x-\mu_k)}.
     \end{equation}
    \STATE Update $\theta_{k+1} = ( \mu_{k+1}, \Sigma_{k+1} )$ such that
     \begin{align}
         \mu_{k+1} &= \frac{\mathbb{E}_{X \sim \pi_{\theta_k}^f}[\gamma_k^{(\nu)}(X)X]}{\mathbb{E}_{X \sim \pi_{\theta_k}^f}[\gamma_k^{(\nu)}(X)]} ,\label{eq:muUpdateHeavy}\\
         \Sigma_{k+1} &= \frac{\mathbb{E}_{X \sim \pi_{\theta_k}^f}[\gamma_k^{(\nu)}(X) X X^{\top}]}{\mathbb{E}_{X \sim \pi_{\theta_k}^f}[\gamma_k^{(\nu)}(X)]} - \mu_{k+1} \mu_{k+1}^{\top} .\label{eq:SigmaUpdateHeavy}
     \end{align}
\ENDFOR
\end{algorithmic}
\end{algorithm}

When the degree of freedom parameter $\nu$ goes to infinity, we have that $\mathcal{T}(x ; \mu, \Sigma, \nu) \rightarrow \mathcal{N}(x ; \mu, \Sigma)$ for any $x \in \mathbb{R}^d$, meaning that Student distributions recover the Gaussian distributions. Moreover, we have at any iteration $k \in \mathbb{N}$ that $\gamma_k^{(\nu)}(x) \rightarrow 1$ when $\nu \rightarrow +\infty$. In this case, the updates \eqref{eq:muUpdateHeavy} and \eqref{eq:SigmaUpdateHeavy} in Algorithm \ref{alg:heavyCE} recover the updates of Algorithm \ref{alg:IGO-ML} with step size $\tau = 1$ when Gaussian distributions are used. Moreover, evaluating the function $\gamma_k^{(\nu)}$ does not imply a heavy computational burden, as it does not involve additional computations of the objective function $f$.

\subsubsection{Main result} We now show that Algorithm \ref{alg:heavyCE} achieves our divergence-decrease condition. This means that the improvement \eqref{eq:strictIncreaseJ} is satisfied at every iteration, and thus that one can use Corollary~\ref{corollary:quantileImprovement} to get quantile improvement when $w(u) = \delta_{u \leq q}(u)$ is used.

\begin{proposition}
    \label{prop:improvementHeavy}
    Consider a sequence $\{\theta_k\}_{k \in \mathbb{N}}$ generated by Algorithm \ref{alg:heavyCE}. At every iteration $k \in \mathbb{N}$, we have the decrease
    \begin{equation}
        \label{eq:decreaseKLHeavy}
        KL(\pi_{\theta_k}^f, p_{\theta_{k+1}}) + \Delta_k \leq KL(\pi_{\theta_k}^f, p_{\theta_k}),
    \end{equation}
    with $\Delta_k > 0$, unless $\theta_{k+1} = \theta_k$ in which case $\Delta_k = 0$.
\end{proposition}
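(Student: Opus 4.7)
The plan is to establish the decrease \eqref{eq:decreaseKLHeavy} via an EM-type argument, adapting the proof strategy of Proposition \ref{prop:mixtKLdecrease} to the continuous latent-variable representation \eqref{eq:studentDefGamma} of the Student distribution. The discrete sum over mixture components is replaced by an integral against the posterior density $q_{\theta_k}(z|x)$ of the latent precision $Z$ given $X=x$ under the current parameters $\theta_k$, which a direct Bayes' rule computation identifies as a Gamma distribution with mean $\int z\, q_{\theta_k}(z|x)\,dz = \gamma_k^{(\nu)}(x)$, the quantity defined in \eqref{eq:defGammaNu}.

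Explicitly, I would write
\begin{equation*}
\ln \frac{p_{\theta_{k+1}}(x)}{p_{\theta_k}(x)} = \ln \int q_{\theta_k}(z|x)\,\frac{\mathcal{N}(x;\mu_{k+1},\Sigma_{k+1}/z)}{\mathcal{N}(x;\mu_k,\Sigma_k/z)}\,dz,
\end{equation*}
apply Jensen's inequality to the concave $\ln$, and integrate against $\pi_{\theta_k}^f$. The $z^{d/2}$ Jacobian factors in the two Gaussians cancel in the ratio; using $\int z\,q_{\theta_k}(z|x)\,dz = \gamma_k^{(\nu)}(x)$, this yields the classical EM identity
\begin{equation*}
KL(\pi_{\theta_k}^f, p_{\theta_k}) - KL(\pi_{\theta_k}^f, p_{\theta_{k+1}}) = [\mathcal{Q}(\theta_{k+1};\theta_k) - \mathcal{Q}(\theta_k;\theta_k)] + \int KL(q_{\theta_k}(\cdot|x), q_{\theta_{k+1}}(\cdot|x))\, \pi_{\theta_k}^f(x)\,dx,
\end{equation*}
where, up to a constant in $\theta$, $\mathcal{Q}(\theta;\theta_k) = -\tfrac{1}{2}\ln\det\Sigma - \tfrac{1}{2}\int \gamma_k^{(\nu)}(x)(x-\mu)^{\top}\Sigma^{-1}(x-\mu)\pi_{\theta_k}^f(x)\,dx$. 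The integrated-$KL$ term is non-negative, and vanishes only when the two Gamma posteriors coincide for $\pi_{\theta_k}^f$-almost every $x$, which by identifying the rate parameters $\nu + (x-\mu)^{\top}\Sigma^{-1}(x-\mu)$ as quadratic forms in $x$ forces $(\mu_{k+1},\Sigma_{k+1}) = (\mu_k,\Sigma_k)$; this will supply the strict positivity $\Delta_k > 0$ whenever $\theta_{k+1} \neq \theta_k$.

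The first-order condition $\partial_\mu \mathcal{Q}(\theta;\theta_k) = 0$ reproduces \eqref{eq:muUpdateHeavy} immediately. The main obstacle is verifying the $\Sigma$-update: the naive maximizer of $\mathcal{Q}$ in $\Sigma$ is $\int \gamma_k^{(\nu)}(x)(x-\mu_{k+1})(x-\mu_{k+1})^{\top}\pi_{\theta_k}^f(x)\,dx$, which differs from \eqref{eq:SigmaUpdateHeavy} by the normalization factor $\mathbb{E}_{\pi_{\theta_k}^f}[\gamma_k^{(\nu)}(X)]$. A natural route to reconcile this discrepancy is a parameter-expansion (PX-EM) refinement in which an auxiliary redundant scale $\alpha$ is inserted into the complete-data model -- for instance by replacing $\mathcal{N}(x;\mu,\Sigma/z)$ by $\mathcal{N}(x;\mu,\alpha\Sigma/z)$ -- and profiled out after joint optimization in $(\mu,\Sigma,\alpha)$, so that the identifiable parameter $\alpha\Sigma$ inherits precisely the missing $\mathbb{E}_{\pi_{\theta_k}^f}[\gamma_k^{(\nu)}(X)]^{-1}$ rescaling while preserving the ascent property of $\mathcal{Q}$; a useful bookkeeping tool here is the trace identity $\gamma_k^{(\nu)}(x)(x-\mu_k)^{\top}\Sigma_k^{-1}(x-\mu_k) = (\nu+d) - \nu\gamma_k^{(\nu)}(x)$, which controls the otherwise awkward $\ln\det\Sigma_{k+1}/\det\Sigma_k$ contribution. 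The remaining routine work consists in justifying the Fubini exchanges between the $z$- and $x$-integrals, which should follow from the boundedness of $W_{\theta_k}^f$ and finiteness of the relevant moments of $\pi_{\theta_k}^f$.
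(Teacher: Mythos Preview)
Your approach is the same EM argument the paper uses: exploit the latent Gaussian--Gamma representation \eqref{eq:studentDefGamma}, set $p=p_{\theta_k}(\cdot\mid x)$ in the evidence lower bound, and reduce the KL decrease to showing that $\theta_{k+1}$ improves (or maximizes) the complete-data $\mathcal{Q}$-function. The paper's proof does exactly this and, at the end, writes the $\mathcal{Q}$-function as $-\tfrac{1}{2}\ln\det\Sigma-\tfrac{1}{2}\mathbb{E}_{\pi_{\theta_k}^f}[\gamma_k^{(\nu)}(X)(X-\mu)^\top\Sigma^{-1}(X-\mu)]$ (modulo a typo in the $\ln\det$ coefficient) and then simply asserts ``from which the result follows.''

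Where you go further than the paper is exactly the right place. The unconstrained maximizer of $\mathcal{Q}$ in $\Sigma$ is $S=\mathbb{E}_{\pi_{\theta_k}^f}[\gamma_k^{(\nu)}(X)(X-\mu_{k+1})(X-\mu_{k+1})^\top]$, whereas the update \eqref{eq:SigmaUpdateHeavy} gives $\Sigma_{k+1}=S/\mathbb{E}_{\pi_{\theta_k}^f}[\gamma_k^{(\nu)}(X)]$. The paper does not address this normalization, so its final ``from which the result follows'' is, at best, a gap. Your PX-EM idea (expand by a redundant scale $\alpha$, maximize jointly, then collapse back to $\alpha\Sigma$) is precisely the standard device that produces the normalized update while preserving monotonicity of $\mathcal{Q}$, and the trace identity you cite is the right bookkeeping tool. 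So your proposal is not only aligned with the paper's proof strategy, it actually patches a point the paper leaves unjustified; for the proposition to hold as stated for the specific update \eqref{eq:SigmaUpdateHeavy}, an argument like yours is needed.
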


\begin{proof}
    Consider any $\theta = (\mu, \Sigma) \in \Theta = \mathbb{R}^d \times \mathcal{S}_{++}^d$ and any distribution $p$ over the optimization variables $x \in \mathbb{R}^d$ and the latent variables $z \in (0,+\infty)$. We then have
    \begin{align*}
        \int \ln p_{\theta}(x)\pi_{\theta_k}^f(x)dx &= \iint \ln p_{\theta}(x) p(z | x) dz\, \pi_{\theta_k}^f(x)dx\\
        &= \iint \ln \left( \frac{p_{\theta}(x,z)}{p_{\theta}(z | x)}\right) p(z | x) dz\, \pi_{\theta_k}^f(x)dx\\
        &= \iint \ln \left( \frac{p_{\theta}(x,z)}{p(z|x)} \right) p(z|x) dz\, \pi_{\theta_k}^f(x)dx - \iint \ln \left( \frac{p_{\theta}(z|x)}{p(z|x)} \right) p(z|x) dz\, \pi_{\theta_k}^f(x)dx\\
        &=\iint \ln \left( \frac{p_{\theta}(x,z)}{p(z|x)} \right) p(z|x) dz\, \pi_{\theta_k}^f(x)dx + \int KL(p(\cdot | x), p_{\theta}(\cdot | x)) \pi_{\theta_k}^f(x)dx.
    \end{align*}
    Hence, we have that
    \begin{equation}
        \label{eq:rewritingLlk}
        \int \ln p_{\theta}(x)\pi_{\theta_k}^f(x)dx\\ \geq \iint \ln \left( \frac{p_{\theta}(x,z)}{p(z|x)} \right) p(z|x) dz\, \pi_{\theta_k}^f(x)dx,
    \end{equation}
    with equality if and only if $p_{\theta}(z | x) = p(z | x)$ for any $z \in (0,+\infty)$ and $x \in \mathbb{R}^d$.
    
    We now compute the gap in Kullback-Leibler divergence and using Equation \eqref{eq:rewritingLlk}, we obtain
    \begin{align*}
        KL(\pi_{\theta_k}^f, p_{\theta_{k+1}}) - KL(\pi_{\theta_k}^f, p_{\theta_k})
        &= -\int \ln  p_{\theta_{k+1}}(x) \pi_{\theta_k}^f(x)dx + \int \ln  p_{\theta_{k}}(x) \pi_{\theta_k}^f(x)dx\\
        &\leq - \iint \ln \left( \frac{p_{\theta_{k+1}}(x,z)}{p_{\theta_k}(z|x)} \right) p_{\theta_k}(z|x) dz\, \pi_{\theta_k}^f(x)dx\\ &\qquad\qquad+ \iint \ln \left( \frac{p_{\theta_{k}}(x,z)}{p_{\theta_k}(z|x)} \right) p_{\theta_k}(z|x) dz\, \pi_{\theta_k}^f(x)dx\\
        &= - \iint \ln  p_{\theta_{k+1}}(x,z) p_{\theta_k}(z|x) dz\, \pi_{\theta_k}^f(x)dx\\ &\qquad\qquad+ \iint \ln p_{\theta_{k}}(x,z) p_{\theta_k}(z|x) dz\, \pi_{\theta_k}^f(x)dx.
    \end{align*}
    Since the degree of freedom parameter is kept constant, we have that $p_{\theta}(x,z) = p_{\theta}(x | z) p(z)$, with $p_{\theta}(x|z) = \mathcal{N}(x ; \mu, \frac{1}{z}\Sigma)$ and $p(z) = \Gamma(z ; \frac{\nu}{2}, \frac{\nu}{2})$ that does not depend on $\theta$. In particular, we can write
    \begin{multline*}
        KL(\pi_{\theta_k}^f, p_{\theta_{k+1}}) - KL(\pi_{\theta_k}^f, p_{\theta_k}) \leq - \iint \ln p_{\theta_{k+1}}(x|z) p_{\theta_k}(z|x) dz\, \pi_{\theta_k}^f(x)dx\\ + \iint \ln p_{\theta_{k}}(x|z) p_{\theta_k}(z|x) dz\, \pi_{\theta_k}^f(x)dx.
    \end{multline*}
    Therefore, showing that $\theta_{k+1} = (\mu_{k+1}, \Sigma_{k+1})$ is such that
    \begin{equation}
        \label{eq:KLdecrReducedToOptProblem}
        \theta_{k+1} = \argmax_{\theta \in \Theta } \iint \ln  p_{\theta}(x|z) p_{\theta_k}(z|x) dz\, \pi_{\theta_k}^f(x)dx,
    \end{equation}
    establishes the decrease in Equation \eqref{eq:decreaseKLHeavy} with equality if and only if $\theta_{k+1} = \theta_k$. We now show that $\theta_{k+1} = (\mu_{k+1}, \Sigma_{k+1})$ as constructed in Algorithm \ref{alg:heavyCE} satisfies \eqref{eq:KLdecrReducedToOptProblem}.

    For any $\theta \in \Theta$, we have that $p_{\theta}(x|z) = \mathcal{N}(x ; \mu, \frac{1}{z}\Sigma)$. Hence, we can compute that
    \begin{equation*}
        \iint \ln p_{\theta}(x|z) p_{\theta_k}(z|x)dz\, \pi_{\theta_k}^f(x)dx = - \ln \det(\Sigma) - \frac{1}{2} \iint z p_{\theta_k}(z|x)dz\, (x-\mu)^{\top}\Sigma^{-1}(x-\mu) \pi_{\theta_k}^f(x)dx.
    \end{equation*}
    For any $x \in \mathbb{R}^d$, one can check that $p_{\theta_k}( \cdot | x)$ is the density of a Gamma distribution with parameters $\left(\frac{\nu+d}{2}, \frac{1}{2}(\nu + (x-\mu_k)\Sigma_k^{-1}(x-\mu_k))\right)$. We then remark that $\gamma_k^{(\nu)}$ as defined in \eqref{eq:defGammaNu} satisfies for any $x \in \mathbb{R}^d$
    \begin{equation*}
        \gamma_k^{(\nu)}(x) = \int z p_{\theta_k}(z | x) dz
    \end{equation*}
    and we thus get an explicit expression for the objective in \eqref{eq:KLdecrReducedToOptProblem} of the form
    \begin{equation*}
        \int \ln p_{\theta}(x|z) p_{\theta_k}(z|x)dz\, \pi_{\theta_k}^f(x)m(dx) = - \ln \det (\Sigma) - \frac{1}{2} \mathbb{E}_{X \sim \pi_{\theta_k}^f} [\gamma_k^{(\nu)}(X)(X-\mu)^{\top}\Sigma^{-1}(X-\mu)],
    \end{equation*}
    from which the result follows.
\end{proof}

\subsubsection{{Discussion}} The result of our Proposition \ref{prop:improvementHeavy} allows to give improvement guarantees for heavy-tailed distributions, that do not form an exponential family. In particular, this applies for any Student family, including Cauchy distributions when $\nu =1$, and to Gaussian distributions in the limit $\nu \rightarrow +\infty$. Cauchy proposals perform better than Gaussian proposals in low dimension, while the reverse is true when the dimension of the problem grows \cite{sanyang2016}. Our algorithm allows to interpolate these two regimes, possibly opening the way to a tail-adaptive algorithm able to select good values of the degree of freedom parameter, as it is done for instance in \cite{daudel2023, guilmeau2024}. 

Similarly to the result of Proposition \ref{prop:mixtKLdecrease}, the result of Proposition \ref{prop:improvementHeavy} leverages an analysis used on existing divergence-minimization algorithms. This shows that such algorithms can be turned into black-box global optimization algorithms by using other types of targets distributions.

Algorithm \ref{alg:heavyCE} requires the computation of expectations with respect to $\pi_{\theta_k}^f$ at every iteration $k \in \mathbb{N}$. In practice, these expectations can be approximated with samples from the current proposal $p_{\theta_k}$ that are then weighted according to their rank, as we discussed in Section \ref{sec:33}. Such approximations are consistent when the number of samples goes to infinity, as discussed in Section \ref{sec:33} and \cite{ollivier2017}. In the context of computational statistics, algorithms similar to Algorithm \ref{alg:heavyCE} have been implemented for experiments in \cite{cappe2008}.

\section{Conclusion and perspectives}
\label{section:perspectives}

We have proposed in this work divergence-based conditions that imply the quantile improvement results achieved by the IGO framework, and can also be used to show improvements in terms of other expectation-based reformulations of the original problem. Therefore, our results can be seen as an alternative way to IGO, to prove that an algorithm achieves a quantile improvement result. The introduced divergence-based conditions are more general, in the sense that IGO algorithms satisfy  them, and our results further allow to predict the magnitude of the quantile improvement from the decrease in divergence. Our divergence-based conditions also allow to cover more general families of proposals than exponential families, including mixtures or heavy-tailed distributions.

In our proofs, we leveraged existing results from statistics and machine learning, related to divergence minimization in the context of variational inference. This connection between the two fields opens new perspectives for the design and study of black-box global optimization algorithms. Future works could exploit this connection to use more complex proposal distribution and adaptation strategies. In particular, existing schemes achieving a divergence-decrease can be turned into black-box global optimization algorithms by using an optimization-based target distribution, as we demonstrated for mixture and heavy-tailed proposals.

\bibliographystyle{abbrv}
\bibliography{biblio}

\end{document}